\newtheorem{theorem}{Theorem}[section]
\newtheorem{lemma}[theorem]{Lemma}
\newtheorem{cor}[theorem]{Corollary}
\theoremstyle{definition}
\newtheorem{definition}[theorem]{Definition}
\numberwithin{equation}{section}
\NewDocumentCommand\xDeclarePairedDelimiter{mmm}
{%
	\NewDocumentCommand#1{som}{%
		\IfNoValueTF{##2}
		{\IfBooleanTF{##1}{#2##3#3}{\mleft#2##3\mright#3}}
		{\mathopen{##2#2}##3\mathclose{##2#3}}%
	}%
}
\xDeclarePairedDelimiter{\abs}{\lvert}{\rvert}
\xDeclarePairedDelimiter{\set}{\lbrace}{\rbrace}
\xDeclarePairedDelimiter{\norm}{\lVert}{\rVert}
\xDeclarePairedDelimiter{\scal}{\langle}{\rangle}
\newcommand{\eps}{\varepsilon}
\newcommand{\ph}{\varphi}
\newcommand{\la}{\lambda}
\newcommand{\Ups}{\Upsilon}
\newcommand{\R}{\mathbb{R}}
\newcommand{\N}{\mathbb{N}}
\newcommand{\f}[2]{\frac{#1}{#2}}
\newcommand{\tf}[2]{\tfrac{#1}{#2}}
\newcommand{\dd}[1]{\,\mathrm{d}{#1}}
\newcommand{\nn}{\nabla}
\newcommand{\on}{\quad\text{on }}
\newcommand{\inn}{\quad\text{in }}
\newcommand\restr[2]{{
		\left.\kern-\nulldelimiterspace 
		#1 
		\vphantom{\big|} 
		\right|_{#2} 
}}
\newcommand\rest[2]{{
		\left.\kern-\nulldelimiterspace 
		#1 
		\right|_{#2} 
}}
\DeclareMathOperator{\Div}{div}
\DeclareMathOperator{\tr}{tr}
\newcommand{\wcs}{\mathrel{\ensurestackMath{\stackon[0.45ex]{\rightharpoonup}{\scriptstyle\ast}}}}
\newcommand{\ccdot}{:}
\newcommand{\bs}{\boldsymbol}
\newcommand{\ve}{\bs v}
\newcommand{\Fe}{\bs F}
\newcommand{\we}{\bs w}
\newcommand{\he}{\bs h}
\newcommand{\uu}{\bs u}
\newcommand{\n}{\bs n}
\newcommand{\x}{\bs x}
\newcommand{\z}{\bs z}
\newcommand{\fe}{\bs f}
\newcommand{\af}{\bs \alpha}
\newcommand{\fit}{\bs \varphi}
\newcommand{\je}{\bs j}
\newcommand{\ta}{\bs \tau}
\newcommand{\X}{\bs X}
\newcommand{\E}{\bs{E\!X}}
\newcommand{\B}{\bs{B\!X}}
\newcommand{\PE}{E\!P}
\newcommand{\PB}{B\!P}
\newcommand{\Gd}{\Gamma_{\rm D}}
\newcommand{\Go}{\Gamma}
\newcommand{\Gn}{\Gamma_{\rm N}}
\newcommand{\ii}{\int_{\Omega}}
\newcommand{\io}{\int_{\Go}}
\newcommand{\Lp}[2]{L^{#1}(#2)}
\newcommand{\LP}[2]{\bs{L}^{#1}(#2)}
\newcommand{\Ep}[2]{E^{#1}(#2)}
\newcommand{\EP}[2]{\bs{E}^{#1}(#2)}
\title[Analysis of a nonlinear elliptic systems with jump on the interface]{Existence and qualitative theory for nonlinear elliptic systems  with a~nonlinear interface condition used in electrochemistry}
\author[M. Bathory]{Michal Bathory}
\address{Mathematical Institute of Charles University\\
	Faculty of Mathematics and Physics\\
	Charles University\\
	Sokolovsk\'a 83\\186 75 Praha 8\\Czech Republic}
\email{bathory@karlin.mff.cuni.cz}
\author[M. Bul\'i\v cek]{Miroslav Bul\'i\v cek}
\address{Mathematical Institute of Charles University\\
	Faculty of Mathematics and Physics\\
	Charles University\\
	Sokolovsk\'a 83\\186 75 Praha 8\\Czech Republic}
\email{mbul8060@karlin.mff.cuni.cz}
\author[O. Sou\v{c}ek]{Ond\v{r}ej Sou\v{c}ek}
\address{Mathematical Institute of Charles University\\
	Faculty of Mathematics and Physics\\
	Charles University\\
	Sokolovsk\'a 83\\186 75 Praha 8\\Czech Republic}
\email{soucek@karel.troja.mff.cuni.cz}
\thanks{This work was supported by the project No. 18-12719S financed by GA\v{C}R and by the project
SVV-2019-260455}
\subjclass{Primary 35J66; Secondary 35M32}
\keywords{nonlinear elliptic  systems, interface condition, Orlicz spaces, metal oxidation, nano-layer}
\begin{document}
\begin{abstract}
	We study a nonlinear elliptic system with prescribed inner interface conditions. These models are frequently used in physical system where the ion transfer plays the important role for example  in modelling of nano-layer growth or Li-on batteries. The key difficulty of the model consists of the rapid or very slow growth of nonlinearity in the constitutive equation inside the domain or on the interface. While on the interface, one can avoid the difficulty by proving a kind of maximum principle of a solution, inside  the domain such regularity for the flux is not available in principle since the constitutive law is discontinuous with respect to the spatial variable. The key result of the paper is the existence theory for these problems, where we require that  the leading functional satisfies either the delta-two or the nabla-two condition. This assumption is applicable in case of fast (exponential) growth as well as in the case of very slow (logarithmically superlinear) growth.
\end{abstract}

\maketitle

\section{Introduction}
This paper focuses on the existence and uniqueness analysis of nonlinear elliptic systems with general growth conditions that may have discontinuity on an inner interface which describes  the transfer of a certain quantity through this interface. To describe such problem mathematically, we consider a domain $\Omega\subset\R^d$, $d\geq2$, with Lipschitz boundary $\partial\Omega$ and with an inner interface $\Go$. The considered domain and the interface are shown in Figure~\ref{fig1} and we always have in mind a similar situation. We could also consider more interfaces inside of the domain $\Omega$ but it would not bring any additional mathematical difficulties so we restrict ourselves only to the situation depicted in Fig.~\ref{fig1}. Thus, that the domain $\Omega$ is decomposed into two parts $\Omega_1$ and $\Omega_2$ by the interface $\Gamma$ such that  $\Omega_i$ is also Lipschitz for $i=1,2$. Further, we assume that there is the Dirichlet part of the boundary  $\Gamma_D \subset \partial\Omega$ and the Neumann part $\Gamma_N \subset \partial \Omega$ and  we denote by $\n$ the unit normal vector on $\Gamma$, which is understood always  as the unit normal outward vector to $\Omega_1$ at $\Gamma$ (note that then $-\n$ is the unit outward normal vector to $\Omega_2$ on $\Gamma$). We also use the symbol $\n$ to denote the unit outward normal vector to $\Omega$ on $\partial \Omega$.

\tikzset{every picture/.style={line width=0.75pt}} 

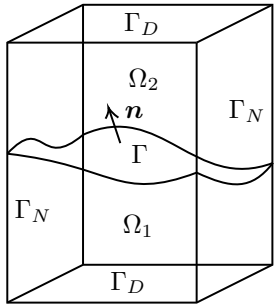
\begin{figure}\label{fig1}
	\centering
	\begin{tikzpicture}[x=0.75pt,y=0.75pt,yscale=-0.5,xscale=1]
	
	\draw    (317.2,50.58) -- (317.2,313.03) ;

	\draw    (412.7,50.58) -- (374.5,88.78) ;

	\draw    (279,88.78) -- (374.5,88.78) ;

	\draw    (374.5,88.78) -- (374.5,351.23) ;

	\draw   (317.2,50.58) -- (412.7,50.58) -- (412.7,50.58) -- (412.7,313.03) -- (374.5,351.23) -- (279,351.23) -- (279,351.23) -- (279,88.78) -- cycle ;
	\draw    (317.2,313.03) -- (279,351.23) ;

	\draw    (317.2,313.03) -- (412.7,313.03) ;

	\draw    (317.2,181.81) .. controls (357,146.23) and (373,240.23) .. (413,210.23) ;

	\draw    (279,200.91) .. controls (298,160.23) and (295,219.23) .. (317.2,181.81) ;

	\draw    (374.5,220.01) .. controls (392,236.23) and (402,238.23) .. (413,210.23) ;

	\draw    (279,200.91) .. controls (333,213.23) and (334.5,250.01) .. (374.5,220.01) ;

	\draw    (336,190.23) -- (330.33,156.21) ;
	\draw [shift={(330,154.23)}, rotate = 440.54] [color={rgb, 255:red, 0; green, 0; blue, 0 }  ][line width=0.75]    (10.93,-3.29) .. controls (6.95,-1.4) and (3.31,-0.3) .. (0,0) .. controls (3.31,0.3) and (6.95,1.4) .. (10.93,3.29)   ;
	
	\draw (348,126) node  [align=left] {$\displaystyle \Omega _{2}$};
	\draw (345,274) node  [align=left] {$\displaystyle \Omega _{1}$};
	\draw (345.85,200.91) node   {$\Gamma $};
	\draw (400,158) node   {$\Gamma _{N}$};
	\draw (292,259) node   {$\Gamma _{N}$};
	\draw (340,331) node   {$\Gamma _{D}$};
	\draw (347,70) node   {$\Gamma _{D}$};
	\draw (343,159.23) node   {$\boldsymbol{n}$};
	
	\end{tikzpicture}
	\caption{Prototypical domain $\Omega$.}
\end{figure}

The problem reads as follows: For given mappings $\he:\Omega\times \R^{d\times N} \to\R^{d\times N}$, $b:\Gamma\times \R^N\to\R^N$,   given Dirichlet data $\phi_0:\Gamma_D \to \R^N$ and Neumann data $\je_0:\Gamma_N \to \R^{d\times N}$, to find $\phi:\overline{\Omega} \to \R^N$ (here $N\in \mathbb{N}$ is a number of unknowns) solving the following system
\begin{equation}\label{pde}
\begin{aligned}
-\Div\he(x,\nn\phi(x))&=0 &&\inn\Omega,\\
\he(x,\nn\phi(x))\cdot\n(x)&=b(x,[\phi](x))&&\on\Go,\\
\he(x,\nn\phi (x))\cdot\n(x)&=\je_0 (x) \cdot \n(x) &&\on\Gn,\\
\phi(x)&=\phi_0(x) &&\on\Gd.
\end{aligned}
\end{equation}
Here, the symbol $[\phi]$ denotes the jump of $\phi$ on $\Go$. More precisely, for $x\in \Gamma$ we define
\begin{equation} \label{jumpfG}
[\phi](x):= \lim_{h \to 0_+} \phi(x+h\n(x))-\phi(x-h\n(x)).
\end{equation}
Consequently, we also cannot assume that $\phi$ has derivatives in the whole $\Omega$ and therefore the symbol $\nabla \phi$ appearing in \eqref{pde} is considered only in $\Omega_1$ and $\Omega_2$. Further, as we shall always assume that $\phi$ is a Sobolev function on $\Omega_1$ as well as on $\Omega_2$, it makes sense to talk about the trace of $\phi$ on $\partial \Omega_1$ and $\partial \Omega_2$  and thus the definition of $[\phi]$ is meaningful, see Section~\ref{Se2} for precise definitions and notations.

The model \eqref{pde} is frequently used when modelling the transfer of ions (or other particles) through the interface $\Gamma$ between two different materials with possibly different relevant properties represented by sets $\Omega_1$ and $\Omega_2$. The first prototypic example, we have in mind, is the the process of charging and discharging of lithium-ion batteries. The model of the form \eqref{pde} with $N=1$ and $\he$ being linear with respect to $\nabla \phi$ but being discontinuous with respect to $x$ when crossing the interface $\Gamma$ was derived and used for modelling of this phenomenon. Note that in this setting, the growth or behaviour of the function $b$ is very fast/wild, which may cause additional difficulties.  We refer to \cite{LaZa11,Lion,Seger} for physical justification of such a model and to \cite{dorfan,dorf} for the mathematical and numerical analysis of such model with zero $\je_0$. The second prototypic example is the modeling of porous metal oxide layer growth in the anodization process. The unknown function $\phi$ then represents an electrochemical potential. It has been experimentally observed that under some special conditions, the titanium oxide forms a nanostructure which resembles pores. In the thesis \cite{H}, it is confirmed numerically that the model \eqref{pde} (or rather its appropriate unsteady version) is able to capture this phenomenon if the nonlinearities $\he$ and $b$ are chosen accordingly. For this particular application,  the mapping  $\he$ models the high field conduction law in $\Omega_2$, while in $\Omega_1$ it corresponds to the standard Ohm law, and $b$ models the Butler-Volmer relation, see e.g.\ \cite{field2,H,field} and references therein for more details. After some unimportant simplifications and by setting all electrochemical constants equal to one, these electrochemical laws take the following form:
\begin{equation}\label{const}
\begin{aligned}
\he(x,\ve)&=\left\{
\begin{aligned}
&\f{\sinh{|\ve|}}{|\ve|}\ve &&\textrm{for }x\in \Omega_1, \, \ve\in\R^{d\times N},\\
&\ve &&\textrm{for }x\in \Omega_2, \, \ve\in\R^{d\times N},
\end{aligned}
\right.\\
b(x,z)&=\f{\exp|z|-1}{|z|}z \quad\textrm{for } z\in\R^N.
\end{aligned}
\end{equation}
Thus, it turns out that in some applications, the nonlinearities $\he$ and $b$ exhibit a very fast growth (exponential-like) with respect to the gradient of unknown and even worse due to the discontinuity with respect to the spatial variable the growth can oscillate between linear or exponential. Therefore, our aim is to obtain a reasonable mathematical  theory for \eqref{pde} under minimal assumptions on the smoothness with respect to the spatial variable $x$ and on the  growth with respect to the gradient of unknown required for $\he$ and $b$.

Without the interface condition on $\Go$, the system \eqref{pde} is a nonlinear elliptic system (provided that $\he$ is a monotone mapping), for which the existence theory can be obtained in a relatively standard way if $\he$ has polynomial growth and leads to the direct application of the standard monotone operator theory. Recently, this theory was further generalized in \cite{Bul2,Bul1} also into the framework of Orlicz spaces with $\he$ having a general (possibly exponential) growth and being discontinuous with respect to the spatial variable. The problem \eqref{pde} with the interface condition was also recently studied in \cite{dorfan} for the scalar setting, i.e. with $N=1$ and only for $\he$ being linear with respect to $\nabla \phi$ and having discontinuity with respect to the spatial variable on $\Go$. The authors in \cite{dorfan} established the existence of a weak solution for rather general class of functions $b$ describing the jump on the interface by proving the maximum principle for $\phi$. Note that such a procedure heavily relies on the scalar structure of the problem, the linearity of $\he$ is used in the proof and it also requires the zero flux $\je_0$.

To give the complete picture of the problem \eqref{pde}, we would like to point out that in case that  $\he$ and $b$ are strictly monotone (and consequently invertible), we can set $\fe:=\he^{-1}$ and $g:=b^{-1}$. Further, we denote $\je:=\he(\nn\phi)$, which in the electrochemical interpretation represents the current density flux. Then, the system \eqref{pde} can be rewritten as
\begin{equation}\label{pdem}
\begin{aligned}
-\Div\je (x)&=0 &&\inn\Omega,\\
\fe(x,\je(x))&=\nn\phi(x) &&\inn\Omega\setminus\Go,\\
g(x,\je (x)\cdot\n (x))&=[\phi](x) &&\on\Go,\\
\je(x)\cdot\n (x)&=\je_0(x)\cdot\n (x) &&\on\Gn,\\
\phi(x)&=\phi_0 (x) &&\on\Gd.
\end{aligned}
\end{equation}
and $\je:\Omega \to \R^{d\times N}$ can be seen as an  unknown. This is the first step to the so-called mixed formulation which seems to be  advantageous from the computational viewpoint, see the numerical experiments in \cite{H}.

The key result of the paper is that we provide a complete existence theory for model \eqref{pde} assuming very little assumption on the structure and growth of nonlinearities $\he$ and $b$ and on the data $\phi_0$ and $\je_0$ and we provide also its equivalence to \eqref{pdem}. Furthermore, we present a constructive proof based on the Galerkin approximation for both formulations \eqref{pde} and \eqref{pdem}, which may serve as a starting point for the numerical analysis. Moreover, in case that the nonlinearities are just derivatives of some convex potentials (which is e.g.\ the case of \eqref{const}), we show that the solution can be sought as a minimizer to certain functional. Finally, we would like to emphasize that we aim to build a robust mathematical theory for a very general class of problems allowing fast/slow growths of nonlinearities, minimal assumptions on data and being able to cover also general systems of elliptic PDE's, not only the scalar problem.

To end this introductory part, we just formulate a meta-theorem for the prototypic model \eqref{const} and refer to Section~\ref{Se2} for the precise statement of our result.
\begin{theorem}[Meta-theorem] \label{th:meta} Let the nonlinearities $\he$ and $b$ satisfy \eqref{const}. Then for any reasonable data $\phi_0$ and $\je_0$ there exist a unique solution $\phi$ to \eqref{pde} and a unique solution $\je$ to \eqref{pdem}. Moreover, these solutions can be found as minimizers of certain functionals.
\end{theorem}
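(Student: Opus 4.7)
Because both nonlinearities in~\eqref{const} are radial and monotone in the norm, they are gradients of convex potentials. Setting
\begin{equation*}
F(x,\ve) = \begin{cases} \cosh|\ve| - 1 & x\in\Omega_1,\\ \tfrac12|\ve|^2 & x\in\Omega_2,\end{cases} \qquad B(z) = \exp|z| - |z| - 1,
\end{equation*}
a direct computation yields $\he(x,\ve) = \partial_\ve F(x,\ve)$ and $b(z) = \partial_z B(z)$. The potentials $F(x,\cdot)|_{\Omega_1}$ and $B$ are smooth, strictly convex and grow super-exponentially, so they satisfy the $\nabla_2$ condition but not $\Delta_2$; on $\Omega_2$ the potential is quadratic and satisfies both. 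Hence the prototypic model falls into the $\nabla_2$ branch of the general existence theorem advertised in the introduction, and the plan is to reduce Theorem~\ref{th:meta} to an application of that abstract result (which will be stated in Section~\ref{Se2}).

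Concretely, I would introduce the primal energy
\begin{equation*}
J(\phi) := \int_{\Omega_1}(\cosh|\nn\phi| - 1)\dx + \int_{\Omega_2}\tfrac12|\nn\phi|^2\dx + \int_{\Go}(\exp|[\phi]| - |[\phi]| - 1)\dS - \int_{\Gn}(\je_0\cdot\n)\cdot\phi\dS,
\end{equation*}
defined on the class of $\phi$ whose restriction to each $\Omega_i$ lies in the appropriate Musielak--Orlicz--Sobolev space, which satisfy $\phi = \phi_0$ on $\Gd$, and whose jump $[\phi]$ is integrable against $B$ on $\Go$. Coercivity of $J$ follows from an Orlicz Poincar\'e-type inequality on $\Omega_2$ anchored by the Dirichlet condition on $\Gd\cap\partial\Omega_2$, propagated to $\Omega_1$ through the jump penalty, plus Young's inequality to absorb the Neumann term. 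A minimizing sequence is then weak-$*$ precompact in the Orlicz--Sobolev topology paired with the conjugate ($\Delta_2$) scale, and lower semicontinuity is a standard consequence of convexity; strict convexity of $J$ yields uniqueness. Variations in admissible directions recover the Euler--Lagrange equations and identify the minimizer with the weak solution of~\eqref{pde}.

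For the mixed formulation~\eqref{pdem} I would instead minimize the dual functional
\begin{equation*}
K(\je) := \int_{\Omega_1}F^*(\je)\dx + \int_{\Omega_2}\tfrac12|\je|^2\dx + \int_{\Go}B^*(\je\cdot\n)\dS - \int_{\Gd}(\je\cdot\n)\cdot\phi_0\dS
\end{equation*}
over fluxes $\je$ with $\Div\je = 0$ and $\je\cdot\n = \je_0\cdot\n$ on $\Gn$. Since $F$ and $B$ satisfy $\nabla_2$, their Legendre duals $F^*$ and $B^*$ satisfy $\Delta_2$, so $K$ lives on separable reflexive Orlicz spaces and the direct method here is unobstructed. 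Equivalence of~\eqref{pde} and~\eqref{pdem} follows from the Fenchel--Young equality, which is saturated precisely at $\je = \he(x,\nn\phi)$ on $\Omega\setminus\Go$ and $\je\cdot\n = b([\phi])$ on $\Go$.

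The main obstacles lie on the primal side. Because $F|_{\Omega_1}$ and $B$ fail $\Delta_2$, smooth functions are not norm-dense in the relevant Orlicz spaces, so weak lower semicontinuity of $J$ must be obtained in the weak-$*$ topology inherited from the $\Delta_2$-predual, and the trace/jump operator has to be extended to this enlarged class. Compounding this, the spatial discontinuity of $F$ across $\Go$ precludes a reduction to a single Orlicz scale: one has to treat $\Omega_1$ and $\Omega_2$ as two separate Orlicz--Sobolev spaces glued along $\Go$ by the integrable jump, and verify that such a gluing is stable under the truncation and mollification used in the Euler--Lagrange derivation. Once these issues are handled in the abstract framework of Section~\ref{Se2}, the verification that~\eqref{const} meets its hypotheses is a routine check of growth and strict monotonicity.
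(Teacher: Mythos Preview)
Your plan is essentially the paper's: verify that the prototypic nonlinearities have convex potentials (the paper's hypothesis $(\Pi)$) and that the conjugate potentials satisfy $\Delta_2$ (the paper's hypothesis $(\Delta)$), then invoke the abstract existence theorems, whose proofs proceed by the direct method exactly as you sketch. A few points of comparison are worth recording.

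First, the paper's rigorous restatement of the meta-theorem (Corollary~\ref{coro}) actually simplifies~\eqref{const} by taking the $\sinh$-law on all of $\Omega$, so a single $x$-independent $\Phi(\ve)=\cosh|\ve|-1$ suffices there; your $x$-dependent potential $F$ is the faithful one for~\eqref{const} as written, and the abstract theorems do cover it because the whole framework is set up in Musielak--Orlicz spaces with $x$-dependent $N$-functions from the start. This also dissolves your ``gluing'' concern: the paper does not treat $\Omega_1$ and $\Omega_2$ as separate Orlicz--Sobolev spaces spliced along $\Go$, but works directly with a discontinuous $\Phi(x,\cdot)$ and defines $\nabla$ distributionally on $\Omega\setminus\Go$, so the trace/jump issues reduce to classical Sobolev traces on each Lipschitz subdomain.

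Second, your claim that the dual functional $K$ lives on \emph{reflexive} Orlicz spaces is not right: $\Delta_2$ for $F^*,B^*$ alone gives only separability of $L^{F^*}$, not reflexivity (which needs $\Delta_2$ on both sides). The direct method still goes through via weak-$*$ compactness against the separable predual $E^{F}$, which is what the paper uses.

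Third, for the equivalence of~\eqref{pde} and~\eqref{pdem} the paper does not use Fenchel--Young saturation. It goes the PDE route: given a weak solution $\je$, apply de~Rham on each $\Omega_i$ to produce potentials $\phi_i$, then identify the jump by a localization lemma on $\Go$ (Lemma~\ref{Lvar}); conversely, from $\phi$ one reads off $\je:=\he(\nabla\phi)$ and checks membership in $\X$ directly from the weak formulation. Your Fenchel--Young route is a legitimate alternative in the potential case, but it needs the energy identity (not merely inequality) for the solution, which in the paper is obtained only under $(\Delta)$ via the approximation built into the space $\PB$.
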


\section{Notations \& Assumptions \& Results}\label{Se2}
In this part, we formulate precisely the main result of the paper. To do so rigorously, we first need to introduce certain function spaces that are capable to capture the very general behaviour of nonlinearities $\he$ and $b$. Therefore, we first shortly introduce the Musielak--Orlicz spaces, then we formulate the assumptions on nonlinearities $\he$ and $b$,  the geometry of $\Omega$ and the data $\phi_0$ and $\je_0$ and finally state the main results of the paper. Also we simply write the symbol ``$\cdot$" to denote the scalar product on $\R^d$ or just to say that the product has $d$-summands, whenever there is no possible confusion. Similarly, the symbol ``$\centerdot$" denotes the scalar product on $\R^N$ or the fact that the product has $N$-summands, and finally the symbol ``$\ccdot$" is reserved for the scalar product on $\R^{d\times N}$, or just for emphasizing that the product has $(d\times N)$-summands.

\subsection{The Musielak--Orlicz spaces}\label{M-O}
We recall here basic definitions and facts about Musielak--Orlicz spaces and the interested reader can find proofs e.g. in~\cite{Kras} or in a~book~\cite{HaHa}.

We say that $\Ups:\Omega \times \R^m\to[0,\infty)$ with  $m\in\N$, is an $N$-function if it is Carath\'{e}odory\footnote{The function $g(x,\z)$ is called Carath\'{e}odory if it is for almost all $x\in \Omega$ continuous with respect to $\z$ and also for all $\z \in \R^m$ measurable with respect to $x$.},  even and convex with respect to the second variable $\z\in \R^m$ and satisfies for almost all $x\in \Omega$ (note that this is a general definition but in our setting the number $m$ will correspond either to $N$ or to $d\times N$ depending on the context)
\begin{equation}\label{grow}
\lim_{|\z|\to 0}\f{\Ups(x,\z)}{|\z|}=0\quad\text{and}\quad\lim_{|\z|\to\infty}\f{\Ups(x,\z)}{|\z|}=\infty.
\end{equation}
Further, the $N$-function $\Ups$ is said to satisfy the $\Delta_2$ condition if there exist constants $c,K\in(0,\infty)$ such that for almost all $x\in \Omega$ and all $\z\in\R^m$ fulfilling $|\z|>K$ there holds
\begin{equation}\label{delta2}
\Ups(x,2\z)\leq c\,\Ups(x,\z).
\end{equation}
The complementary (convex conjugate) function to $\Ups$ is defined for all $(x,\z)\in \Omega\times \R^m$ by (within this section, the symbol ``$\cdot$" is also used for the scalar product on $\R^m$)
$$
\Ups^*(x,\z)=\sup_{\bs y\in\R^m}(\z\cdot{\bs y}-\Ups(x,\bs y))
$$
and it is also an $N$-function. This definition directly leads to the Young inequality
$$
\z_1\cdot \z_2\leq\Ups(x,\z_1)+\Ups^*(x,\z_2)\quad \textrm{for all } \z_1, \z_2\in\R^m
$$
and thanks to the convexity of $\Ups$ and the fact $\Ups(x,0)=0$ (it follows from \eqref{grow}), we have that for all $(x,\z)\in\Omega \times \R^m$ and $0<\eps<1$ there holds
$$
\Ups(x,\eps\z)\leq\eps\Ups(x,\z).
$$
This allows us to introduce the $\eps$-Young inequality (with $\eps \in (0,1)$)
$$
\z_1\cdot \z_2\leq\eps\Ups(x,\z_1)+\Ups^*\left(x,\f{\z_2}{\eps}\right).
$$

Having the notion of $N$-function, we can now define the Musielak--Orlicz spaces. Recall that $\Omega \subset\R^d$ is an open  set and for arbitrary $m\in \mathbb{N}$  define the set
$$
M^{\Ups}(\Omega):=\set{\ve\in L^1(\Omega; \mathbb{R}^m):\;\int_{\Omega}\Ups(x,\ve(x))\,\mathrm{d}x<\infty}.
$$
Since the set $M$ does not form necessarily a vector space, we define  the Orlicz space $L^{\Ups}(\Omega)$ as the linear hull of $M^{\Ups}(\Omega)$ and equip it with the  Luxembourg norm
$$
\norm{\ve}_{\Ups;\Omega}:=\inf\set{\la>0:\int_{\Omega}\Ups\left(x,\f{\ve(x)}{\lambda}\right)\dd{x}\leq 1}\quad \textrm{for all } \ve\in L^{\Ups}(\Omega).
$$
We will often omit writing the subscript $_\Omega$ whenever it is clear from the context.
It also directly follows from the Young inequality, that we have the H\"older inequality  in the form
$$
\int_{\Omega}\ve(x)\cdot\uu(x)\dd{x}\leq 2\norm{\ve}_{\Ups}\norm{\uu}_{\Ups^*}\quad\textrm{for all }\ve\in L^{\Ups}(\Omega)\textrm{ and all }\uu\in L^{\Ups^*}(\Omega).
$$
Note that the equality $M^{\Ups}(\Omega)=L^{\Ups}(\Omega)$ holds if and only if $\Ups$ satisfies the $\Delta_2$ condition \eqref{delta2}.
Further, by $E^{\Ups}(\Omega)$ we denote the closure of $L^{\infty}(\Omega; \mathbb{R}^m)$ in the norm $\norm{\cdot}_{\Ups}$. The purpose of this definition is that the space $E^{\Ups}(\Omega)$ is separable, since the set of all polynomials on $\Omega$ is dense in $E^{\Ups}(\Omega)$. In addition, if $\Ups$ satisfies the $\Delta_2$ condition, we have the following identities
\begin{equation}\label{2}
E^{\Ups}(\Omega)=M^{\Ups}(\Omega)=L^{\Ups}(\Omega),
\end{equation}
while if the $\Delta_2$ condition is not satisfied, there holds
\begin{equation}\label{2.5}
E^{\Ups}(\Omega)\subsetneqq M^{\Ups}(\Omega)\subsetneqq L^{\Ups}(\Omega).
\end{equation}
Furthermore, since  $E^{\Ups}(\Omega)$ is a linear space, we have for arbitrary $\ve \in E^{\Ups}(\Omega)$ and $K\in \R$ that $K\ve \in E^{\Ups}(\Omega)$. Consequently, it follows from \eqref{2}--\eqref{2.5} that
\begin{equation}
\ii \Ups(x,K\ve(x))\dd{x} < \infty \qquad \textrm{for all }\ve \in E^{\Ups}(\Omega) \textrm{ and all } K\in \R.\label{goodups}
\end{equation}
Finally, for any $N$-function $\Ups$, we have the following identification of dual spaces
\begin{equation}\label{5}
L^{\Ups}(\Omega)=(E^{\Ups^*}(\Omega))^*.
\end{equation}
Thus, although the space $L^{\Ups}(\Omega)$ is not reflexive\footnote{It is evident consequence of \eqref{2}, \eqref{2.5} and \eqref{5} that $L^{\Ups}(\Omega)$ is reflexive if and only if both functions $\Ups$ and $\Ups^*$ satisfy the $\Delta_2$ condition.} in general, the property \eqref{5} still ensures at least the weak$^\ast$ sequential compactness of bounded sets in $L^{\Ups}(\Omega)$ by the Banach-Alaoglu theorem. Finally, the space $\Lp{\Ups}{\Omega}$ coincides with the weak$^\ast$ closure of~$L^{\infty}(\Omega; \R^m)$.
%

The very similar definitions can be made for the spaces defined on $\Gamma$ (the $(d-1)$-dimensional subset of $\Omega$) and we have the spaces $E^{\Ups}(\Gamma)$, $M^{\Ups}(\Gamma)$ and $L^{\Ups}(\Gamma)$ with exactly same characterizations as above.

\subsection{Assumptions on the domain and nonlinearities}\label{S:ass}
We start this part by precise specification of the domain $\Omega$, whose prototype is depicted in Fig.~\ref{fig1}, where one can see $\Omega$ with its boundary $\partial\Omega=\Gd\cup\Gn$ and interface $\Go$. Below, we state precisely the necessary assumptions on $\Omega$, however the reader should always keep in mind the ``topology'' of the set from Fig.~\ref{fig1}.

\bigskip

\noindent{\bf Domain $\Omega$:} We assume the following:
\begin{itemize}
	\item[(O1)]The set $\Omega\subset\R^d$, $d\geq 2$, is open, bounded, connected and Lipschitz.
	\item[(O2)]The boundary $\partial\Omega$ can be written as a union of the closures of two relatively (in $(d-1)$ topology) open disjoint sets $\Gn$ and $\Gd$, where $\Gd$ consists of two separated components $\Gd^1$ and $\Gd^2$ of non-zero surface measure.
	\item[(O3)] The interface $\Go$ is a connected component of the set $\Omega$  that separates $\Gd^1$ from $\Gd^2$ such that the set $\Omega$ is bisected by $\Go$ into $\Omega_1$ and $\Omega_2$ and both $\Omega_1$ and $\Omega_2$ are Lipschitz sets.
\end{itemize}
We recall that the outward normal vector $\n$  on $\Go$ is chosen to point outwards $\Omega_1$.

Next, we introduce the assumptions on nonlinearities. We split them into two parts. The first one deals with the standard minimal assumption on the smoothness, growth and monotonicity, and the second one is an additional assumption that will be used for the existence theorem.

\bigskip

\noindent {\bf Assumptions on $\he$ and $b$:} We assume that $\he:\Omega\times \R^{d\times N} \to\R^{d\times N}$ and $b:\Gamma\times \R^N\to\R^N$ are Carath\'{e}odory mappings and satisfy:
\begin{itemize}
\item[(A1)] The mappings $\he$ and $b$ are  monotone with respect to the second variable and zero at zero, i.e. for all $\ve_1,\ve_2 \in \mathbb{R}^{d\times N}$, all $z_1,z_2 \in \mathbb{R}^N$ and almost all $x\in \Omega$ there holds
\begin{equation}\label{nula}
\begin{split}
(\he(x,\ve_1)-\he(x,\ve_2))\ccdot (\he_1 - \he_2) &\ge 0,\\
(b(x,z_1)-b(x,z_2))\centerdot (z_1 - z_2) &\ge 0,\\
\he(x,0)=b(x,0)&=0.
\end{split}
\end{equation}
\item[(A2)] There exist $N$-functions $\Phi$ and $\Psi$,  a nonnegative constant $C$ and positive constants $0<\alpha_{\he}, \alpha_b\leq1$ such that for all $\ve \in \mathbb{R}^{d\times N}$, all $z\in \mathbb{R}^N$ and almost all $x\in \Omega$, there holds
\begin{align}\label{i}
\he(x,\ve)\ccdot\ve&\geq \alpha_{\he}(\Phi^*(x,\he(x,\ve))+\Phi(x,\ve))-C,\\
\label{ii}
b(x,z)\centerdot z&\geq\alpha_b(\Psi^*(x,b(z))+\Psi(x,z))-C.
\end{align}
\end{itemize}

In case, we are more interested in the formulation for fluxes, i.e.\ for \eqref{pdem}, we have the following assumptions on $\fe$ and $g$.

\bigskip

\noindent {\bf Assumptions on $\fe$ and $g$:} We assume that $\fe:\Omega\times \R^{d\times N} \to\R^{d\times N}$ and $g:\Gamma\times \R^N\to\R^N$ are Carath\'{e}odory mappings and satisfy:
\begin{itemize}
\item[(A1)$^*$] The mappings $\he$ and $b$ are  monotone with respect to the second variable and zero at zero, i.e. for all $\ve_1,\ve_2 \in \mathbb{R}^{d\times N}$, all $z_1,z_2 \in \mathbb{R}^N$ and almost all $x\in \Omega$ there holds
\begin{equation}\label{nulaA}
\begin{split}
(\fe(x,\ve_1)-\fe(x,\ve_2))\ccdot (\ve_1 - \ve_2) &\ge 0,\\
(g(x,z_1)-g(x,z_2))\centerdot (z_1 - z_2) &\ge 0,\\
\fe(x,0)=g(x,0)&=0.
\end{split}
\end{equation}
\item[$\mathrm{(A2)}^*$] There exist $N$-functions $\Phi$ and $\Psi$,  a nonnegative constant $C$ and positive constants $0<\alpha_{\he}, \alpha_b\leq1$ such that for all $\ve \in \mathbb{R}^{d\times N}$, all $z\in \mathbb{R}^N$ and almost all $x\in \Omega$, there holds
\begin{align}
\label{iA}	\fe(x,\ve)\ccdot\ve&\geq \alpha_{\fe}(\Phi^*(x,\ve)+\Phi(x,\fe(x,\ve)))-C,\\
\label{iiA}	g(x,z)\centerdot z&\geq\alpha_g(\Psi^*(x,z)+\Psi(x,g(x,z)))-C.
\end{align}
\end{itemize}

Note that if $\he$ and $b$ are strictly monotone, i.e. \eqref{nula}$_1$ holds  for all $\ve_1\neq \ve_2$ with the strict inequality sign, then we can denote their inverses (with respect to the second variable) $\fe:=\he^{-1}$, $g:=b^{-1}$ and the assumptions (A1)--(A2) and (A1)$^*$--(A2)$^*$ are equivalent. Also the assumption $\he(0)=b(0)=0$ in (A1) is not necessary, it just makes the proofs more transparent. If $\he(0)\neq0$, we can always write $\he(\ve)=(\he(\ve)-\he(0))+\he(0)$ and follow step by step all proofs in the paper.

Finally, we specify the assumptions that will guarantee the existence (and also the uniqueness) of the solution to \eqref{pde} and \eqref{pdem}, respectively.

\bigskip

\noindent {\bf Key assumptions for the existence of solution:} In what follows we assume that at least one of the following holds:
\begin{itemize}
	\item[($\Pi$)] There exists $F_{\he}:\Omega \times \R^{d\times N} \to \R$ and $F_b:\Omega\times \R^N \to \R$ (potentials) such that $\he$ and $b$ are their Fr\'echet derivatives, i.e. for all $\ve \in \R^{d\times N}$, $z\in \R^N$ and almost all $x\in \Omega$ there hold
$$
\frac{\partial F_{\he}(x,\ve)}{\partial \ve}= \he(x,\ve), \qquad \frac{\partial F_b(x,z)}{\partial z}= b(x,z).
$$
\item[($\Pi$)$^*$] There exists $F_{\fe}:\Omega \times \R^{d\times N} \to \R$ and $F_{g}:\Omega\times \R^N \to \R$ (potentials) such that $\fe$ and $g$ are their Fr\'echet derivatives, i.e. for all $\ve \in \R^{d\times N}$, $z\in \R^N$ and almost all $x\in \Omega$ there holds
$$
\frac{\partial F_{\fe}(x,\ve)}{\partial \ve}= \fe(x,\ve), \qquad \frac{\partial F_{g}(x,z)}{\partial z}= g(x,z).
$$
\item[($\Delta$)]
At least one of the couples $(\Phi,\Psi)$ and $(\Phi^*,\Psi^*)$ satisfies\footnote{We say that a couple $(\Phi,\Psi)$ satisfies the $\Delta_2$ condition if both functions $\Phi$ and $\Psi$ satisfy the $\Delta_2$ condition.}  the $\Delta_2$ condition.
\end{itemize}

From now, whenever we talk about $\Phi$ and $\Psi$, we always mean the $N$-functions from \eqref{i}--\eqref{ii} or \eqref{iA}--\eqref{iiA}, respectively. Also to shorten the notation, we will omit writing the dependence on spatial variable $x\in \Omega$ but it is always assumed implicitly, e.g.  $\he(\ve)$ always means $\he(x,\ve)$ or $\he(x,\ve(x))$ depending on the context and similarly we use the same abbreviations for other functions/mappings.

\subsection{Notion of a weak solution} \label{S:weak}
In this part, we define the precise notion of a weak solution to \eqref{pde} and/or to \eqref{pdem}. Since we deal with functions that may have a jump across $\Gamma$, we use a slightly nonstandard definition of a weak gradient on $\Omega$, which however coincides with the standard definition on $\Omega_1$ and $\Omega_2$.  Therefore for any $q\in\Lp{1}{\Omega; \R^N}$, we  say that $\we \in \Lp{1}{\Omega; \R^{d\times N}}$ is a gradient of $q$ if for all $\fit\in \mathcal{C}^{\infty}_0 (\Omega\setminus\Go; \R^{d\times N})$ we have\footnote{For sake of clarity, the identity \eqref{grad} written in terms of components of $\we$, $\fit$ and $q$ has the following form
$$
\sum_{i=1}^N \sum_{j=1}^d \ii \we_{i,j} \fit_{i,j}=-\sum_{i=1}^N \ii q_i \left(\sum_{j=1}^d \frac{\partial \fit_{i,j}}{\partial x_j} \right).
$$}
\begin{equation}\label{grad}
\ii \we \ccdot \fit=-\ii q \centerdot (\Div\fit)
\end{equation}
and we will denote $\nn q:=\we$ as usual. This will be the default meaning of the symbol $\nn$ in the whole paper. It is easy to see that if $\nn q$ is integrable, then the restrictions $\rest{q}{\Omega_1}$ and $\rest{q}{\Omega_2}$ are  Sobolev functions on $\Omega_1$ and $\Omega_2$, respectively. Hence, since both sets are Lipschitz, we can define for such $q$'s the jump of $q$ across $\Go$ as
$$
[q]:=\restr{\tr_{\Omega_2}q}{\Go}-\restr{\tr_{\Omega_1}q}{\Go},
$$
where $\tr_{\Omega_i}$, $i=1,2$, is the trace operator acting upon functions defined on $\Omega_i$.

\bigskip

\noindent {\bf Function spaces related to problem \eqref{pde}.} First, we focus on the definition of certain spaces that are related to the problem \eqref{pde}. Thus, we introduce the following three spaces
\begin{align*}
P&:=\{q\in L^{1}(\Omega;\R^N):\nn q\in \LP{\Phi}{\Omega},\; [q]\in L^{\Psi}(\Go), \; \restr{\tr_{\Omega_1}q}{\Gd}=0,\;\restr{\tr_{\Omega_2}q}{\Gd}=0\},\\
\PE&:=\set{q\in P:\nn q\in\EP{\Phi}{\Omega},\quad[q]\in\Ep{\Psi}{\Go}},\\
\PB&:=\set{q\in P: \exists \{q^n\}_{n=1}^\infty \subset \PE, \; \nn q^n \rightharpoonup^* \nn q \textrm{ in }\LP{\Phi}{\Omega},\;  [q^n]\rightharpoonup^* [q] \textrm{ in }L^{\Psi}(\Go)}.
\end{align*}
We equip these spaces  with the norm
\begin{equation}\label{Banach}
\norm{q}_{P}:=\norm{\nn q}_{\Phi;\Omega}+\norm{[q]}_{\Psi,\Go},
\end{equation}
where the fact that it is a norm follows from the Poincar\'{e} inequality and from $|\Gamma_D|>0$. The motivation for definition of such spaces are the properties  of Musielak--Orlicz spaces stated in Section~\ref{M-O}. Moreover, we used the bold face to denote $\EP{\Phi}{\Omega}$ and $\LP{\Phi}{\Omega}$ to emphasize that the objects with values in $\R^{d\times N}$ are considered, while we used the normal font letters $L^{\Psi}(\Go)$ and $\Ep{\Psi}{\Go}$ to denote the space of mappings with value in $\R^N$. Furthermore,  the space $P$ equipped with the norm~\eqref{Banach} is a Banach space since it can be identified with  a closed subspace of the Banach space $\LP{\Phi}{\Omega}\times L^{\psi}(\Go)$ (see section Section~\ref{M-O} for properties of underlying spaces). However, since it is not separable in general, we construct the space $\PE$, which can be again identified with a closed subspace of $\EP{\Phi}{\Omega} \times \Ep{\Psi}{\Go}$, which is separable. Therefore the Banach space $\PE$ is separable as well. Finally, the fact, that the solution will be in most cases found as a weak$^*$ limit of functions from $\PE$,  motivates the definition of $\PB$, which is thus nothing else than the weak$^*$ closure of $\PE$. It is also evident that if $\Phi$ and $\Psi$ satisfy $\Delta_2$ condition then $P=\PE=\PB$.

\bigskip

\noindent{\bf Function spaces related to problem \eqref{pdem}.} In case we are more interested in solving~\eqref{pdem}, we set
\begin{align*}
\X&:=\{\ta\in \LP{\Phi^*}{\Omega}, \ta\cdot\n\in\Lp{\Psi^*}{\Go}: \int_{\Go} (\ta\cdot\n)\centerdot [\varphi]+\int_{\Omega}\nn\ph\ccdot\ta=0\; \forall \ph\in \PE\},\\
\E&:=\{\ta\in\X:\ta\in\EP{\Phi^*}{\Omega},\; \ta\cdot\n\in\Ep{\Psi^*}{\Go}\},\\
\B&:=\{\ta\in\X: \exists \{\ta^n\}_{n=1}^{\infty},\;  \ta^n \rightharpoonup^* \ta \textrm{ in } \EP{\Phi^*}{\Omega},\; \ta^n\cdot\n\rightharpoonup^* \ta\cdot\n \textrm{ in } \Ep{\Psi^*}{\Go}\}.
\end{align*}
Since we assume just integrability of $\ta :\Omega \to \R^{d\times N}$, we specify how the constraints from the definition of $\X$, $\E$ and $\B$ are understood. First, the meaning of divergence and the zero trace on the Neumann part of the boundary is usually  formulated as follows:
\begin{equation}\label{id1}
\left. \begin{aligned}
\ta\cdot\n&=0 &&\textrm{on }\Gamma_N\\
 \Div\ta&=0 &&\textrm{in }\Omega
\end{aligned}\right\}
\quad \overset{def}\Leftrightarrow \quad
\ii\nn\ph\ccdot\ta=0\quad\forall\ph\in \mathcal{C}^{0,1}(\overline{\Omega};\R^N), \; \rest{\ph}{\overline{\Gd}}=0.
\end{equation}
Note that the right hand side of \eqref{id1} is fulfilled for $\ta \in \X$ since Lipschitz functions vanishing on $\Gamma_D$ belong to $\PE$. Furthermore, these functions do not have a jump on~$\Gamma$ and therefore the corresponding integral in the definition of $\X$ vanishes.  Hence, \eqref{id1} is just the distributional form of  the operator $\Div$ (divergence) as well as  the trace of $\ta\cdot \n$. We just allow a broader class of test functions in the definition of $\X$. Second, we can specify the meaning of $\ta\cdot\n \in L^{\Psi^*}(\Go)$ in the definition of $\X$ as follows:
\begin{equation}
\ta\cdot\n \in L^{\Psi^*}(\Go)\overset{def}\Leftrightarrow  \exists w\in L^{\Psi^*}(\Go),\;
\int_{\Go}w\centerdot \varphi=\int_{\Omega_1}\nn\ph\ccdot\ta\;\forall\ph\in \mathcal{C}^{0,1}_0(\Omega;\R^N).\label{id2}
\end{equation}
Note that \eqref{id1} also implies that
$$
\int_{\Omega_1}\nn\ph\ccdot\ta=-\int_{\Omega_2}\nn\ph\ccdot\ta.
$$
Hence, since we know that $\rest{\ta \cdot \n}{\Go}$ is well defined distribution because $\Div \ta =0$, it follows from \eqref{id2} that $w$ can be identified with $\rest{\ta \cdot \n}{\Go}$, which is the meaning we use in the paper. However, also for the trace of $\ta \cdot \n$, we shall require a broader class of test functions than Lipschitz, which correspond to the test function from $\PE$ in the definition of $\X$. Finally, we equip $\X$, $\E$ and $\B$ with the norm
\begin{equation}\label{repr}
\norm{\ta}_{\X}:=\norm{\ta}_{\Phi^*;\Omega}+\norm{\ta\cdot\n}_{\Psi^*;\Go}.
\end{equation}
Similarly as before, we have that $\X$ and $\E$ are the Banach spaces and in addition, since $\E$ can be identified with a closed subspace of $\EP{\Phi^*}{\Omega} \times \Ep{\Psi^*}{\Go}$, which is separable, we have that $\E$ is separable as well.

\bigskip

\noindent{\bf Assumptions on data $\phi_0$ and $\je_0$.} The last set of assumptions is related to the given boundary and volume data. To simplify the presentation, we assume that $\phi_0$ and $\je_0$ are  defined on $\Omega$ and specify the assumptions\footnote{The reason for such simplification is that we do not want to employ the trace and/or the inverse trace theorem in Musielak--Orlicz spaces. But clearly, every $\phi_D\in W^{1,\infty}(\Gd)$ can be extended to the whole $\Omega$ such that it satisfies the assumption (D1).} on $\phi_0:\Omega \to \R^N$ and $\je_0:\Omega \to \R^{d\times N}$.
\begin{itemize}
\item[(D1)] We assume that $\phi_0\in W^{1,1}(\Omega; \R^N)$ such that
\begin{equation}\label{bc}
\nn\phi_0\in\EP{\Phi}{\Omega}.
\end{equation}
\item[(D2)] We assume that $\je_0:\Omega\to \R^{d\times N}$ is measurable and satisfies
\begin{equation}\label{bc2}
\je_0 \in \EP{\Phi^*}{\Omega}, \; \je_0\cdot\n=0 \textrm{ on }\Gamma,\; \Div\je_0=0 \textrm{ in } \Omega.
\end{equation}
\end{itemize}
It is worth noticing, that we assume here better properties than we expect from solution. First, since $\phi$ is a Sobolev function, it does not have any jump on $\Gamma$. Second, we assume the the flux $\je_0$ over the surface $\Gamma$ is also vanishing (since divergence is zero, we can talk about the normal component of the flux on $\Gamma$, see \eqref{bc2}). The reason for such setting is that we just want to simplify the presentation of main results  and the proofs.

\bigskip

\noindent {\bf Definition of a weak solution.}
We shall define four notions of weak solution - two for each formulation \eqref{pde} and \eqref{pdem}. We start with the motivation of a notion of weak solution to \eqref{pde}. We assume that we have a sufficiently good solution to \eqref{pde} and we take the scalar product of  the first equality (it has $N$ component) in \eqref{pde} by arbitrary $q\in \PE$. We integrate the result over $\Omega$ and after using integration by parts, we deduce that (recall our notation for $\nabla q$ in \eqref{grad} and also our definition of $\n$ and $[q]$ on $\Gamma$)
$$
\begin{aligned}
	0&= -\int_{\Omega_1}\Div(\he(\nn\phi)-\je_0) \centerdot q -\int_{\Omega_2}\Div(\he(\nn\phi)-\je_0) \centerdot q\\
	&=-\int_{\partial \Omega_1\setminus \Gamma}(\he(\nn\phi)-\je_0)\n \centerdot q -\int_{\partial \Omega_2\setminus \Gamma}(\he(\nn\phi)-\je_0)\n \centerdot q+\int_{\Gamma}(\he(\nn\phi)-\je_0)\n \centerdot [q]\\
&\qquad +\int_{\Omega}(\he(\nn\phi)-\je_0) \ccdot \nabla q \\
	&\underset{\eqref{bc2}}{\overset{\eqref{pde}}=}\int_{\Go}b([\phi]) \centerdot [q]+\int_{\Omega}\he(\nn\phi) \ccdot \nabla q-\int_{\Omega}\je_0\ccdot\nn q,
	\end{aligned}
$$
where we also used the facts that $q$ vanishes on $\Gamma_D$, that $\Div \je_0 =0$ and that $\je_0\cdot \n = 0$ on $\Gamma$.  The above identity can thus be understood as a weak formulation of \eqref{pde} and we are led to the following definition.

\begin{definition}\label{ws2} Let $\Omega$ satisfy (O1)--(O3), nonlinearities $\he$ and $b$ satisfy (A1)--(A2), data $\phi_0$ and $\je_0$ satisfy (D1)--(D2).  We say that the function $\phi$ is a weak solution to \eqref{pde} if
$$\phi-\phi_0\in P,\quad\he(\nn\phi)\in\LP{\Phi^*}{\Omega},\quad b([\phi])\in\Lp{\Psi^*}{\Go}$$
and
\begin{equation}\label{wspot}
\ii\he(\nn\phi)\ccdot\nn q+\int_{\Go}b([\phi])\centerdot[q]=\int_{\Omega} \je_0 \ccdot \nn q \quad \textrm{ for all } q\in \PE.
\end{equation}
\end{definition}
Using the H\"older inequality, we see that both integrals in \eqref{wspot} are well defined. In addition, we see that for sufficiently regular $\phi$, the computation above shows that the $\phi$ solving \eqref{wspot} solves \eqref{pde} as well. Further, we introduce another concept of solution, which a~priori does not require any information on $\he (\nabla \phi)$ and $b([\phi])$.
\begin{definition}\label{Den}
Let $\Omega$ satisfy (O1)--(O3), nonlinearities $\he$ and $b$ satisfy (A1)--(A2), data $\phi_0$ and $\je_0$ satisfy (D1)--(D2).  We say that the function $\phi$ is a variational weak solution to \eqref{pde} if
$$\phi-\phi_0\in P$$
and
\begin{equation}\label{ener}
\ii(\he(\nn\phi)-\je_0)\ccdot\nn(\phi-\phi_0 - q)+\int_{\Go}b([\phi])\centerdot[\phi-q]\le 0 \quad \textrm{ for all } q\in \PE.
\end{equation}
\end{definition}
Although,  we did not impose any assumptions on the integrability of $\he(\nn\phi)$ and $b([\phi])$, this information is included implicitly in \eqref{ener} as it is shown in Lemma~\ref{cons} below.

The next notion of a weak solution concerns the ``dual'' formulation \eqref{pdem} in terms of the flux $\je$. Formally, it can be again derived from \eqref{pdem}, \eqref{bc} and integration by parts as follows
\begin{align*}
\ii(\fe(\je)-\nn\phi_0)\ccdot\ta&=\int_{\Omega_1}\nn(\phi-\phi_0)\ccdot\ta+\int_{\Omega_2}\nn(\phi-\phi_0)\ccdot\ta\\
&=\int_{\partial\Omega_1}(\phi-\phi_0)\centerdot(\ta\cdot\n)+\int_{\partial\Omega_2}(\phi-\phi_0)\centerdot(\ta\cdot\n)\\
&=-\int_{\Go}[\phi]\centerdot(\ta\cdot\n)=-\int_{\Go}g(\je\cdot\n)\centerdot(\ta\cdot\n)
\end{align*}
for any $\ta\in\E$.

Thus, we are led to the following definition.
\begin{definition}\label{ws}
Let $\Omega$ satisfy (O1)--(O3), nonlinearities $\fe$ and $g$ satisfy (A1)$^*$--(A2)$^*$, data $\phi_0$ and $\je_0$ satisfy (D1)--(D2).  We say that the function $\je$ is a weak solution to \eqref{pdem} if
$$\je - \je_0 \in\X,\quad\fe(\je)\in\LP{\Phi}{\Omega},\quad g(\je\cdot\n)\in\Lp{\Psi}{\Go}$$
and
\begin{equation}\label{ws1}
\ii\fe(\je)\ccdot\ta+\int_{\Gamma}g(\je\cdot\n)\centerdot(\ta\cdot\n)=\ii\nn\phi_0\ccdot\ta\quad \textrm{ for all } \ta\in\E.
\end{equation}
\end{definition}

Analogously as for $\phi$, we can define the variational weak solution also for $\je$.
\begin{definition}\label{wsf}
Let $\Omega$ satisfy (O1)--(O3), nonlinearities $\fe$ and $g$ satisfy (A1)$^*$--(A2)$^*$, data $\phi_0$ and $\je_0$ satisfy (D1)--(D2).  We say that the function $\je$ is a variational weak solution to \eqref{pdem} if
	$$\je-\je_0\in\X$$
	and
	\begin{equation}\label{wsf1}
	\ii(\fe(\je)-\nn\phi_0)\ccdot(\je-\je_0-\ta )+\int_{\Gamma}g(\je\cdot\n)\centerdot((\je-\ta )\cdot\n) \le 0 \quad \textrm{ for all } \ta\in\E.
	\end{equation}
\end{definition}

Note that in Definition~\ref{ws2} the boundary condition $\phi=\phi_D$ on $\Gd$ is imposed by $\phi-\phi_0\in P$, whereas in Definition~\ref{ws} the same boundary condition is encoded in \eqref{ws1} implicitly (it is shown later, see part ii) of Theorem~\ref{eq}). The situation is reversed for the boundary condition $\je\cdot\n=\je_0\cdot\n$ on $\Gn$.

\section{Main results}

We start this section with the first  key result of the paper that focuses on the existence and uniqueness of a solution to \eqref{pde}.
\begin{theorem}\label{ex1}
	Let $\Omega$ satisfy {\rm(O1)--(O3)} and $\phi_D$ fulfil {\rm(D1)}. Suppose that $\he$ and $b$ satisfy {\rm(A1), (A2)}.
	\begin{itemize}
		\item[(i)] Assume that {\rm(}$\Delta${\rm)} holds. Then, there exists a weak solution $\phi$ to \eqref{pde}. In addition the weak solution satisfies $\phi \in \phi_0+\PB$ and  \eqref{wspot} and \eqref{ener} are valid for any function $q\in \PB$.
		\item[(ii)] Assume that $(\Pi)$ holds.
			Then, there exists a variational weak solution $\phi\in\phi_0+P$ to \eqref{pde} and this solution is also a weak solution.
\end{itemize}
If, in addition, the mapping $\he$ is strictly monotone, then the weak solution is unique in the class $\phi_0+\PB$.
\end{theorem}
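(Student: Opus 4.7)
The plan is to prove (i) via a Galerkin approximation in the separable space $\PE$ and (ii) via the direct method of the calculus of variations applied to the convex potentials provided by $(\Pi)$. In both cases the nonlinearities $\he(\nabla\phi)$ and $b([\phi])$ are identified in the limit by a Minty-type monotonicity argument based on (A1). Uniqueness then follows by inserting the difference of two solutions into the weak formulation and invoking strict monotonicity of $\he$.

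\textbf{Existence under $(\Delta)$.} Fix a basis $\{q_k\}$ of $\PE$ and seek Galerkin approximations $\phi^n = \phi_0 + \sum_{k=1}^n c^n_k q_k$ satisfying \eqref{wspot} for $q = q_1,\dots,q_n$; existence of $c^n \in \R^n$ is a routine application of Brouwer's fixed-point theorem together with the coercivity from (A2). Testing with $\phi^n - \phi_0$ and absorbing the right-hand side via the $\eps$-Young inequality yields simultaneous uniform bounds
$$
\norm{\nabla\phi^n}_{\Phi;\Omega} + \norm{[\phi^n]}_{\Psi;\Go} + \norm{\he(\nabla\phi^n)}_{\Phi^*;\Omega} + \norm{b([\phi^n])}_{\Psi^*;\Go} \le C,
$$
and the duality \eqref{5} provides weak$^*$ limits $\phi \in \phi_0 + \PB$, $\chi$, $\kappa$ of (a subsequence of) $\phi^n$, $\he(\nabla\phi^n)$, $b([\phi^n])$. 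The limit equation \eqref{wspot} with $(\chi,\kappa)$ in place of $(\he(\nabla\phi), b([\phi]))$ then holds for every test function in $\PE$, and for every $q \in \PB$ by the weak$^*$ density built into the definition of $\PB$. To identify $\chi=\he(\nabla\phi)$ and $\kappa=b([\phi])$, pass to the limit in the monotonicity inequality
$$
\ii(\he(\nabla\phi^n)-\he(\nabla q))\ccdot\nabla(\phi^n-q)+\int_{\Go}(b([\phi^n])-b([q]))\centerdot([\phi^n]-[q])\ge 0,
$$
where the only nontrivial terms $\ii \he(\nabla\phi^n) \ccdot \nabla\phi^n$ and $\int_{\Go} b([\phi^n]) \centerdot [\phi^n]$ are rewritten through the Galerkin identity tested with $\phi^n - \phi_0$, and apply Minty's trick with $q = \phi - t\psi$, $t \searrow 0$. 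Assumption $(\Delta)$ enters precisely here: the $\Delta_2$ property of one of the pairs $(\Phi,\Psi)$, $(\Phi^*,\Psi^*)$ ensures that either the test functions or the images of the nonlinearities lie in the norm-separable $E$-spaces, permitting the required weak$^*$/norm pairings in the limit.

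\textbf{Existence under $(\Pi)$.} Define on the affine set $\phi_0 + P$ the functional
$$
J(v) := \ii F_{\he}(\nabla v) + \int_{\Go} F_b([v]) - \ii \je_0 \ccdot \nabla v.
$$
Convexity of $F_{\he}(x,\cdot)$ and $F_b(x,\cdot)$ is inherited from the monotonicity of their derivatives $\he$ and $b$, while (A2) combined with integration along rays yields coercivity in the norm \eqref{Banach}. A bounded minimizing sequence has a weak$^*$ convergent subsequence, and weak$^*$ lower semicontinuity of convex integral functionals on Musielak--Orlicz spaces produces a minimizer $\phi \in \phi_0 + P$. Convexity of $J$ converts minimality into the variational inequality \eqref{ener}, so $\phi$ is a variational weak solution. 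Since \eqref{ener} is affine in $q \in \PE$, writing it with $q$ and with $q \pm t\psi$ ($\psi\in\PE$, $t>0$) forces the linear part to vanish, giving the equality \eqref{wspot} and hence showing $\phi$ is also a weak solution; the integrability assertions on $\he(\nabla\phi)$ and $b([\phi])$ are then supplied by the lemma announced after Definition~\ref{Den}.

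\textbf{Uniqueness and main obstacle.} If $\phi_1, \phi_2 \in \phi_0 + \PB$ are two weak solutions, then $q:=\phi_1-\phi_2 \in \PB$ is admissible in the extended weak formulation established in part (i); subtracting the two identities yields
$$
\ii(\he(\nabla\phi_1)-\he(\nabla\phi_2))\ccdot\nabla(\phi_1-\phi_2)+\int_{\Go}(b([\phi_1])-b([\phi_2]))\centerdot([\phi_1]-[\phi_2])=0,
$$
and strict monotonicity of $\he$ forces $\nabla\phi_1=\nabla\phi_2$ a.e., hence $\phi_1=\phi_2$ by the Dirichlet condition on $\Gd$. The principal technical obstacle throughout is the typical failure of reflexivity of the underlying Orlicz spaces: only weak$^*$ compactness is at our disposal, the weak$^*$ limit of the Galerkin iterates generally lies in $\PB\setminus\PE$, and one must consequently split the admissible test-function spaces and the argument itself according to whether $(\Delta)$ or $(\Pi)$ is assumed.
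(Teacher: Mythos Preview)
Your overall architecture matches the paper's exactly: part~(i) is done via Galerkin approximation in the separable space $\PE$ with a Brouwer argument, uniform estimates from (A2), weak$^*$ limits, and a monotonicity-based identification; part~(ii) is done via the direct method applied to the functional built from the potentials $F_{\he},F_b$, followed by the passage from \eqref{ener} to \eqref{wspot} through linearity in $q$ (this last step is precisely the content of the paper's Theorem~\ref{cons}); uniqueness is handled identically.

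The one place where your sketch is not merely terse but actually does not go through as written is the identification step in part~(i). The classical Minty choice $q=\phi-t\psi$, $t\searrow 0$, requires two things that can fail in the non-reflexive regime: (a) $q-\phi_0$ must be an admissible test function, but when only $(\Phi^*,\Psi^*)$ satisfies $\Delta_2$ you have $\phi-\phi_0\in\PB\setminus\PE$ in general, so $\phi-t\psi-\phi_0\notin\PE$; and (b) to even write the limit inequality for general $q\in\PE$ you need $\he(\nabla q)\in\EP{\Phi^*}{\Omega}$ (so that it pairs with the weak$^*$ limit of $\nabla\phi^n$), which does not follow from $\nabla q\in\EP{\Phi}{\Omega}$ and (A2) alone. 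The paper circumvents both issues by a truncation variant of Minty: one first establishes the energy identity (the analogue of \eqref{limsup}), then tests the limit monotonicity inequality not with $q\in\PE$ but with arbitrary $\ve\in L^\infty(\Omega;\R^{d\times N})$, $z\in L^\infty(\Gamma;\R^N)$, and finally chooses $\ve=\nabla\phi\,\chi_{\{|\nabla\phi|\le k\}}-\varepsilon\overline{\ve}\,\chi_{\{|\nabla\phi|\le j\}}$ (and similarly for $z$), letting $k\to\infty$, dividing by $\varepsilon$, and letting $\varepsilon\to 0_+$. This is the device from \cite{Bul1,GMW12,Gw22} and is what your phrase ``$(\Delta)$ enters precisely here'' should be unpacked into.
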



As a direct consequence of the above theorem, we also obtain the result stated in Meta-theorem~\ref{th:meta}, which is now formulated as
\begin{cor}\label{coro}
	Let $\Omega$ satisfy {\rm (O1)--(O3)}, let $N=1$  and let $\phi_0$ and $\je_0$ fulfil {\rm(D1)} and {\rm (D2)} with $\Phi(\ve):=\cosh (|\ve|)-1$ and set $\Psi(z):= \exp(|z|)-|z|-1$. Then $\Phi$ and $\Psi$ are $N$-functions and  there exists a unique variational weak solution $\phi\in \phi_0 + \PB$ to
\begin{equation}	
\begin{aligned}
	\Div\left(\f{\sinh|\nn\phi|}{|\nn\phi|}\nn\phi\right)&=0 &&\inn\Omega\setminus\Go,\\
	\f{\sinh|\nn\phi|}{|\nn\phi|}\nn\phi\cdot\n-\f{\exp(|[\phi]|)-1}{|[\phi]|}[\phi]&=0  &&\on\Go,\\
	\nn\phi\cdot\n&=\je_0\cdot \n &&\on\Go_N,\\
	\phi&=\phi_0 &&\on\Gd.
	\end{aligned}
\end{equation}
\end{cor}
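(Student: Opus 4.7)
The plan is to reduce the corollary to Theorem~\ref{ex1} by verifying its hypotheses for the specific choice $\he(\ve) = (\sinh|\ve|/|\ve|)\ve$, $b(z) = ((\exp|z|-1)/|z|)z$. First I would confirm that $\Phi(\ve) = \cosh|\ve| - 1$ and $\Psi(z) = \exp|z| - |z| - 1$ are $N$-functions in the sense of Section~\ref{M-O}: both are compositions of convex even $C^1$ functions of a nonnegative real variable (vanishing quadratically at~$0$ and growing exponentially at infinity) with $|\cdot|$, so evenness, convexity, and the two limits in~\eqref{grow} are immediate from Taylor expansion.

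Next I would verify (A1), (A2), ($\Pi$), and ($\Delta$). Direct differentiation (using the removable singularities of $\sinh r/r$ and $(e^r-1)/r$ at $r=0$) gives $\nabla \Phi = \he$ and $\nabla \Psi = b$, so ($\Pi$) holds with $F_\he = \Phi$ and $F_b = \Psi$. Monotonicity~(A1) follows from the convexity of these potentials, and $\he(0) = b(0) = 0$ is built into the formulas. For~(A2) I would invoke the Fenchel--Young equality for gradients of convex functions,
\begin{equation*}
\he(\ve)\ccdot\ve = \Phi(\ve) + \Phi^*(\he(\ve)), \qquad b(z)\centerdot z = \Psi(z) + \Psi^*(b(z)),
\end{equation*}
which gives (A2) with $\alpha_\he = \alpha_b = 1$ and $C = 0$. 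For~($\Delta$) I would check the $\Delta_2$ condition for the couple $(\Phi^*, \Psi^*)$: since $\Phi, \Psi$ have exponential growth, their conjugates have asymptotics of the form $|w|\log|w| - |w|$ at infinity (via inversion of the Legendre relation), and this growth trivially satisfies the~$\Delta_2$ condition.

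With~($\Delta$) verified, Theorem~\ref{ex1}(i) delivers a weak solution $\phi \in \phi_0 + \PB$ for which~\eqref{ener} holds for all $q \in \PB \supseteq \PE$; in particular $\phi$ is a variational weak solution in the sense of Definition~\ref{Den}. For uniqueness, the closing clause of Theorem~\ref{ex1} requires strict monotonicity of $\he$, which I would deduce from strict convexity of $\Phi$ via a short case analysis: for distinct $\ve_1, \ve_2$, if they are non-negative scalar multiples of each other then $|\ve_1| \ne |\ve_2|$ and the strict convexity of $\cosh$ on $[0,\infty)$ closes the argument; otherwise the strict triangle inequality $|t\ve_1 + (1-t)\ve_2| < t|\ve_1| + (1-t)|\ve_2|$ combined with the strict monotonicity of $\cosh$ does the job. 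I do not expect any serious obstacle — the most delicate point is the $\Delta_2$ verification for the conjugates, which relies on pinning down the logarithmic asymptotics correctly; an alternative route through~($\Pi$) and Theorem~\ref{ex1}(ii) is available but would only place the solution in the larger space $\phi_0 + P$, so routing through~($\Delta$) is preferable for matching the corollary's statement exactly.
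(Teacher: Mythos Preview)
Your proposal is correct and follows essentially the same route as the paper: verify that $\Phi,\Psi$ are $N$-functions, that $(\Pi)$ holds with $F_{\he}=\Phi$, $F_b=\Psi$, that $(\Delta)$ holds via the $\Delta_2$ condition for $(\Phi^*,\Psi^*)$, and that $\he$ is strictly monotone, then invoke Theorem~\ref{ex1}. The only notable difference is in the verification of $\Delta_2$ for the conjugates: you compute the Legendre transform asymptotics $|w|\log|w|-|w|$ directly, whereas the paper instead checks the ``$\nabla_2$'' inequality $2K\Phi(\ve)\le\Phi(2\ve)$ (and likewise for $\Psi$) for large arguments and cites \cite[Theorem~4.2]{Kras} to pass to the conjugates; both arguments are standard and equally short.
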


To summarize, we can obtain the existence of a weak solution in two cases. Either in case that there exists a potential (in this case the solution will be sought as a minimizer) or in case that  ($\Delta$) holds. Note that ($\Delta$)  is quite a weak assumption as the $N$-functions $\Phi$ such that both $\Phi$ and $\Phi^*$ do not satisfy the $\Delta_2$ condition are not that easy to find, especially in the applications (see the example in \cite[p.\ 28]{Kras}). Moreover, we would like to point out here that in case ($\Delta$) holds, we obtained a better solution than just $\phi\in\phi_0+P$ and we even have $\phi\in\phi_0+\PB$. Note that it is trivial if $\Phi$ and $\Psi$ satisfy the $\Delta_2$ condition. However, if it is not the case, it is a piece of new information. Second, we obtained the uniqueness in the class $\phi_0+\PB$, which may be a smaller class than that introduced for weak solution. However, since we know that there exists a weak solution in $\phi_0+\PB$, this class may be understood as a proper selector for obtaining a uniqueness of a solution.

The second existence theorem uses the alternative weak formulation \eqref{pdem} in terms of the flux $\je$.

\begin{theorem}\label{ex2}
	Let $\Omega$ satisfy {\rm(O1)--(O3)} and let $\je_0$ fulfil {\rm(D2)}. Suppose that $\fe$ and $g$ satisfy $\mathrm{(A1)}^*$ and $\mathrm{(A2)}^*$.
	\begin{itemize}
		\item[(i)] Assume that {\rm(}$\Delta${\rm)} holds.
			Then, there exists a weak solution $\je$  \eqref{pdem}. In addition the weak solution fulfills $\je \in \je_0 + \B$ and \eqref{ws1} and \eqref{wsf1} are valid for any function $\ta\in \B$.
		\item[(ii)] Assume that $(\Pi^*)$ holds.
		Then, there exists a variational weak solution $\je\in \je_0+\X$ to \eqref{pdem} and this solution is also a weak solution.
	\end{itemize}
If, in addition, the mapping $\fe$ is strictly monotone, then the weak solution is unique in the class $\je_0+\B$.
\end{theorem}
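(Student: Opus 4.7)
My approach parallels the proof of Theorem~\ref{ex1}, but is carried out in the flux spaces $\E$, $\B$ and $\X$ rather than in the potential spaces. For part~(ii), given the potentials $F_{\fe}$ and $F_{g}$ from $(\Pi)^{*}$, I would minimize the convex functional
$$
J(\ta) := \ii F_{\fe}(\ta) + \io F_{g}(\ta\cdot\n) - \ii \nn\phi_0 \ccdot \ta
$$
over $\ta\in\je_0+\X$. Convexity follows from the monotonicity of $\fe$ and $g$, while (A2)$^{*}$ together with the $\eps$-Young inequality produces uniform modular bounds $\ii \Phi^{*}(\ta) + \io \Psi^{*}(\ta\cdot\n)\leq C$ along any minimizing sequence. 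The Banach--Alaoglu theorem in $\LP{\Phi^{*}}{\Omega}\times\Lp{\Psi^{*}}{\Go}$ then extracts a weak$^{*}$ limit $\je\in\je_0+\X$, and weak$^{*}$ lower semicontinuity of $J$, inherited from convexity via duality with $\EP{\Phi}{\Omega}\times\Ep{\Psi}{\Go}$, identifies $\je$ as the minimizer. The inequality \eqref{wsf1} then follows from $J(\je)\leq J(\je_0+\ta)$ combined with the subgradient inequality $F_{\fe}(\je_0+\ta)-F_{\fe}(\je)\geq\fe(\je)\ccdot(\je_0+\ta-\je)$ and its boundary analogue; taking directional derivatives along the path $\je+\tau\sigma$ with $\sigma\in\E$ upgrades \eqref{wsf1} to the equality \eqref{ws1}.

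For part~(i), I would set up a Galerkin scheme in the separable space $\E$: take a countable dense family $\{\ta^{k}\}\subset\E$, set $\E_{n} := \mathrm{span}\{\ta^1,\dots,\ta^n\}$, and look for $\je^n = \je_0 + \sum_{k=1}^{n} c^n_k \ta^k$ satisfying
$$
\ii \fe(\je^n)\ccdot \ta^j + \io g(\je^n\cdot\n)\centerdot(\ta^j\cdot\n) = \ii \nn\phi_0 \ccdot \ta^j, \qquad j=1,\dots,n.
$$
Existence of $\je^n$ is provided by the monotonicity of $\fe$ and $g$ via a standard corollary of Brouwer's theorem. Testing with $\je^n-\je_0\in\E_{n}$ and invoking (A2)$^{*}$ with the $\eps$-Young inequality together with $\nn\phi_0\in\EP{\Phi}{\Omega}$ yields uniform modular bounds both for $(\je^n,\je^n\cdot\n)$ in $\LP{\Phi^{*}}{\Omega}\times\Lp{\Psi^{*}}{\Go}$ and for $(\fe(\je^n),g(\je^n\cdot\n))$ in $\LP{\Phi}{\Omega}\times\Lp{\Psi}{\Go}$; passing to weak$^{*}$ subsequences produces a candidate $\je\in\je_0+\B$ along with limits $\bS$ and $\we$ of the two nonlinearities.

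The main obstacle is the identification $\bS=\fe(\je)$ and $\we=g(\je\cdot\n)$, which cannot be read off from weak$^{*}$ convergence alone. The classical Minty monotonicity trick must be adapted to the Musielak--Orlicz duality, and this is where the dichotomy in ($\Delta$) enters: if $(\Phi,\Psi)$ satisfies $\Delta_2$, then $\LP{\Phi}{\Omega}$ and $\Lp{\Psi}{\Go}$ coincide with their $E$-subspaces and are separable, so the inequality $\ii(\fe(\je^n)-\fe(\ta))\ccdot(\je^n-\ta)\geq 0$ together with its boundary counterpart may be tested against arbitrary $\ta\in\je_0+\E$; if instead $(\Phi^{*},\Psi^{*})$ satisfies $\Delta_2$, the roles of $\je^n$ and $\fe(\je^n)$ are swapped and the test functions converge strongly in $\EP{\Phi^{*}}{\Omega}\times\Ep{\Psi^{*}}{\Go}$. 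In either case, the Galerkin identity supplies a closed expression for $\limsup_{n}\bigl(\ii\fe(\je^n)\ccdot(\je^n-\je_0)+\io g(\je^n\cdot\n)\centerdot((\je^n-\je_0)\cdot\n)\bigr)$ which, inserted into the monotonicity inequality for arbitrary $\ta$, forces the desired identification.

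Finally, once \eqref{ws1} has been established on $\je_0+\B$, uniqueness under strict monotonicity of $\fe$ follows by subtracting the weak formulations for two solutions $\je^1,\je^2\in\je_0+\B$ and testing with the admissible difference $\je^1-\je^2\in\B$; the resulting identity splits into two nonnegative terms by (A1)$^{*}$, and strict monotonicity of $\fe$ forces $\je^1=\je^2$ almost everywhere. The equivalence of \eqref{ws1} and \eqref{wsf1} under $(\Pi)^{*}$ is a direct consequence of the convexity of $F_{\fe}$ and $F_{g}$, in complete analogy with the corresponding step in the proof of Theorem~\ref{ex1}.
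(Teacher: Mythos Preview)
Your outline matches the paper's strategy: part~(ii) via the direct method on the convex functional $J$, part~(i) via a Galerkin scheme in the separable space $\E$ followed by a Minty-type identification that splits according to which half of $(\Delta)$ holds, and uniqueness by subtraction. Two points need sharpening.

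The subgradient step in part~(ii) is stated in the wrong direction. The inequality $F_{\fe}(\je_0+\ta)-F_{\fe}(\je)\geq\fe(\je)\ccdot(\je_0+\ta-\je)$ combined with $J(\je)\le J(\je_0+\ta)$ only tells you that $\fe(\je)\ccdot(\je_0+\ta-\je)+\ldots$ is bounded above by something nonnegative, which is vacuous; it does not yield \eqref{wsf1}. What does work---and what the paper does in the parallel argument for Theorem~\ref{ex1}(ii)---is the difference-quotient route you also allude to: from $J(\je)\le J((1-\la)\je+\la(\je_0+\ta))$ one divides by $\la$ and lets $\la\to0_+$ via Fatou's lemma, convexity supplying the integrable majorant $F_{\fe}(\je_0+\ta)\in L^1(\Omega)$ since $\ta\in\E$. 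This produces \eqref{wsf1} directly, and Theorem~\ref{cons} then upgrades it to \eqref{ws1}.

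In part~(i), the identification $\bS=\fe(\je)$, $\we=g(\je\cdot\n)$ requires more than testing the monotonicity inequality against $\ta\in\je_0+\E$. After the $\limsup$ computation one would like to set $\ta=\je\pm\eps\sigma$, but $\je$ is only in $\LP{\Phi^*}{\Omega}$ and $\fe(\je\pm\eps\sigma)$ need not be controllable in the non-reflexive setting. The paper resolves this by testing against truncated $L^\infty$ comparisons $\ve=\je\,\chi_{\{|\je|\le k\}}-\eps\,\overline{\ve}\,\chi_{\{|\je|\le j\}}$ (and the analogous choice on $\Go$), then letting $k\to\infty$, $\eps\to0_+$, $j\to\infty$ in that order. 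This truncation is the essential Orlicz-space refinement of the classical Minty trick and should not be omitted.
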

Also here, we would like to point out that in case ($\Delta$) holds, we found a solution in $\B$ and this is also the class of solutions in which we obtained the uniqueness.

Finally, we state the result about the equivalence of  Definitions~\ref{ws2}~and~\ref{ws}.
\begin{theorem}\label{eq}
Let all assumptions of Definitions~\ref{ws2}~and~\ref{ws} be satisfied. In addition, assume that $\he$, $\fe$, $g$ and $b$ are strictly monotone, satisfying $\he^{-1}=\fe$ and $b^{-1}=g$. Then
\begin{itemize}
\item[i)]If $\phi$ is a weak solution in sense of Definition~\ref{ws2} then $\je:=\he(\nn \phi)$ satisfies $\je-\je_0\in \X$ with  $\je\cdot \n= b([\phi])$ on $\Gamma$ and \eqref{ws1} holds for all $\ta\in \E \cap \mathcal{C}^1(\overline{\Omega};\mathbb{R}^{d\times N})$. In addition if {\rm(}$\Delta${\rm)} holds and $\phi \in \phi_0 + \PB$ then $\je$ is a weak solution in sense of Definition~\ref{ws}.
\item[ii)]If $\je$ is a weak solution in sense of Definition~\ref{ws} then there exists  $\phi\in \phi_0+P$ fulfilling $\nn \phi=\fe(\je)$ in $\Omega$ and $[\phi]=g(\je\cdot \n)$ on $\Gamma$ and $\phi$ is a weak solution in sense of Definition~\ref{ws2}.
\end{itemize}
\end{theorem}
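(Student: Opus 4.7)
Both directions rest on the pointwise inversions $\fe=\he^{-1}$ and $g=b^{-1}$, which yield the identifications $\fe(\je)=\nn\phi$ and $g(\je\cdot\n)=[\phi]$ once we set $\je:=\he(\nn\phi)$ and use $\je\cdot\n=b([\phi])$. Direction (i) will then be essentially a rewriting of~\eqref{wspot} into the $\X$-constraint plus~\eqref{ws1}. Direction (ii) is the substantive part: given $\je$, one has to \emph{reconstruct} a potential $\phi$ from the flux, identify its jump, and check~\eqref{wspot}; the two nontrivial ingredients will be a de Rham--Poincar\'e lemma in the Orlicz--Sobolev framework and an extension result with prescribed normal trace on $\Gamma$.

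\textbf{Part (i).} The plan is to put $\je:=\he(\nn\phi)\in\LP{\Phi^*}{\Omega}$ and rewrite~\eqref{wspot} as
$$\ii(\je-\je_0)\ccdot\nn q=-\io b([\phi])\centerdot[q]\quad\forall q\in\PE.$$
Using $\Div\je_0=0$ and $\je_0\cdot\n=0$ on $\Gamma$ from~(D2), this identity is exactly the defining relation of $\X$ with normal trace $(\je-\je_0)\cdot\n=b([\phi])$, so $\je-\je_0\in\X$ and $\je\cdot\n=b([\phi])\in\Lp{\Psi^*}{\Gamma}$. Substituting $\fe(\je)=\nn\phi$ and $g(\je\cdot\n)=[\phi]$ into~\eqref{ws1} reduces it to $\ii\nn(\phi-\phi_0)\ccdot\ta+\io[\phi]\centerdot(\ta\cdot\n)=0$, which for $\ta\in\E\cap\bs{C}^1(\overline\Omega;\R^{d\times N})$ is a routine piecewise integration by parts on $\Omega_1,\Omega_2$ using $\Div\ta=0$ in each piece, $\ta\cdot\n=0$ on $\Gamma_N$, and $\phi=\phi_0$ on $\Gamma_D$. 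Under the additional assumptions $(\Delta)$ and $\phi-\phi_0\in\PB$, one approximates $\phi-\phi_0$ in the weak-$*$ sense by $q^n\in\PE$; because $\ta\in\E$ has components in $\bs{E}^{\Phi^*}\times E^{\Psi^*}$ (the predual sides), the two pairings are weak-$*$ continuous and one passes to the limit.

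\textbf{Part (ii): construction of $\phi$.} Given $\je$, test~\eqref{ws1} with $\ta\in\Test{\Omega_i;\R^{d\times N}}$ satisfying $\Div\ta=0$. Such $\ta$, extended by zero, is bounded and has vanishing normal trace on $\Gamma$, hence lies in $\E$, and~\eqref{ws1} forces
$$\int_{\Omega_i}(\fe(\je)-\nn\phi_0)\ccdot\ta=0$$
for every divergence-free compactly supported $\ta$ in $\Omega_i$. A de Rham/Poincar\'e-type lemma on the Lipschitz sub-domain $\Omega_i$, adapted to the Orlicz--Sobolev setting, then produces $\psi_i$ with $\nn\psi_i\in\LP{\Phi}{\Omega_i}$ and $\nn\psi_i=\fe(\je)-\nn\phi_0$ a.e.\ in $\Omega_i$. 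Fix the arbitrary constants by imposing $\tr\psi_i=0$ on $\Gd^i$ (possible since $|\Gd^i|>0$) and set $\phi:=\psi_i+\phi_0$ on $\Omega_i$; then $\nn\phi=\fe(\je)$ in $\Omega\setminus\Gamma$ and $\phi=\phi_0$ on $\Gd$.

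\textbf{Part (ii): jump identification and the main obstacle.} For $\ta\in\E\cap\bs{C}^1(\overline\Omega;\R^{d\times N})$, classical integration by parts on $\Omega_1,\Omega_2$ gives $\ii\nn(\phi-\phi_0)\ccdot\ta=-\io[\phi]\centerdot(\ta\cdot\n)$ (where $\tr\phi$ is well defined on each $\partial\Omega_i$ since $\phi\in\phi_0+W^{1,\Phi}(\Omega_i)$); comparing with~\eqref{ws1} after substituting $\fe(\je)=\nn\phi$ yields
$$\io([\phi]-g(\je\cdot\n))\centerdot(\ta\cdot\n)=0\quad\forall\ta\in\E\cap\bs{C}^1(\overline\Omega;\R^{d\times N}).$$
To conclude $[\phi]=g(\je\cdot\n)$ a.e.\ on $\Gamma$ one needs the family of normal traces $\{\ta\cdot\n\}$ to separate $\Lp{\Psi}{\Gamma}$; the plan is to construct, for every smooth $\eta\in\Test{\Gamma;\R^N}$, a smooth divergence-free (in each $\Omega_i$) extension $\ta$ with $\ta\cdot\n=\eta$ on $\Gamma$ and $\ta\cdot\n=0$ on $\Gn$---the room on $\Gd^i$ (where $|\Gd^i|>0$) together with the Lipschitz geometry of $\Omega_i$ absorbs any solenoidal compatibility obstruction. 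This identification automatically gives $[\phi]=g(\je\cdot\n)\in\Lp{\Psi}{\Gamma}$, hence $\phi-\phi_0\in P$; finally~\eqref{wspot} follows from the $\X$-constraint for $\je-\je_0$ combined with $\he(\nn\phi)=\je$ and $b([\phi])=b(g(\je\cdot\n))=\je\cdot\n$ (using $\je_0\cdot\n=0$). The two genuinely hard points are thus the Orlicz--Sobolev de Rham lemma underlying the potential recovery and the divergence-free extension with prescribed normal trace used to separate $\Lp{\Psi}{\Gamma}$; these are what bridge the two formulations.
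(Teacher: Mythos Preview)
Your treatment of part~(i) is correct and matches the paper's argument; your weak-$*$ approximation via $q^n\in\PE$ even handles both $(\Delta)$ sub-cases at once, whereas the paper splits them.

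In part~(ii) there is a genuine gap in establishing the Dirichlet condition $\phi=\phi_0$ on $\Gd$. Testing \eqref{ws1} only with \emph{compactly supported} divergence-free fields in $\Omega_i$ and applying de Rham determines each $\psi_i$ only up to an additive constant. Your sentence ``Fix the arbitrary constants by imposing $\tr\psi_i=0$ on $\Gd^i$'' is not justified: subtracting a single constant cannot make a trace vanish on a positive-measure set unless that trace was already constant there, and nothing in your argument shows this. The error then propagates: the integration-by-parts identity you write, $\ii\nn(\phi-\phi_0)\ccdot\ta=-\io[\phi]\centerdot(\ta\cdot\n)$, silently drops the boundary term $\int_{\Gd}(\phi-\phi_0)\centerdot(\ta\cdot\n)$, and this term is genuinely present because your own extensions for the jump identification carry nonzero normal trace on $\Gd$ (``the room on $\Gd^i$\ldots absorbs any solenoidal compatibility obstruction'').

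The paper closes this gap by testing \eqref{ws1} not with compactly supported fields but with $\ta_1\in\E\cap\mathcal{C}^1(\overline{\Omega};\R^{d\times N})$ satisfying $\ta_1\equiv 0$ in $\Omega_2$: such $\ta_1$ are divergence-free in $\Omega_1$ with $\ta_1\cdot\n=0$ on $(\Gamma_N\cap\partial\Omega_1)\cup\Gamma$, but may have nonzero normal trace on $\Gd^1$. Orthogonality of $\fe(\je)-\nn\phi_0$ to this larger class yields, after de Rham and integration by parts, $\int_{\Gd^1}(\phi_1-\phi_0)\centerdot(\ta_1\cdot\n)=0$ for all such $\ta_1$; since the admissible normal traces span the mean-zero functions on $\Gd^1$, this forces $\phi_1-\phi_0$ to be constant on $\Gd^1$, and \emph{now} the free constant can legitimately be absorbed. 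Your separation argument for the jump is otherwise close in spirit to the paper's auxiliary lemma (local graph coordinates plus a Bogovskii correction), so once the Dirichlet issue is repaired the remainder of your plan goes through.
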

This theorem shows the equivalence between the notions of solution if ($\Delta$) holds. Furthermore, if $(\Delta)$ is not satisfied then we have at least the equivalence of solution in class of distributional solutions of \eqref{pde} and \eqref{pdem} respectively. Furthermore, it follows from the above theorem, that we can choose the formulation, which is more proper e.g.\ for  numerical purposes, and we still construct the unique solution to the original problem. Moreover, we see that the existence of a weak solution $\je$ automatically implies the existence of a weak solution $\phi$ even in the case when $(\Delta)$ is not satisfied. Therefore also from the point of view of analysis of the problem, the dual formulation \eqref{pdem} seems to be preferable to the weak formulation \eqref{pde}.

The last result states when a variational weak solution is also a weak solution and similarly when a weak solution is also a variational weak solution.
\begin{theorem}\label{cons}
	Let $\phi\in \phi_0+P$ be a variational weak solution to \eqref{pde}. Then $\phi$ is also a weak solution and satisfies
	\begin{equation}\label{konecnost}
	\ii\Big(\Phi(\nn\phi)+\Phi^*(\he(\nn\phi))\Big)+\int_{\Go}\Big(\Psi([\phi])+\Psi^*(b([\phi]))\Big)<\infty.
	\end{equation}
Similarly, let $\phi\in \phi_0 + \PB$ be a weak solution to \eqref{pde} and $(\Delta)$ hold. Then $\phi$ is also a variational weak solution.

Let $\je\in \je_0+\X$ be a variational weak solution to \eqref{pdem}. Then $\je$ is also a weak solution and satisfies
	\begin{equation}\label{konec}
	\ii\Big(\Phi(\fe(\je))+\Phi^*(\je)\Big)+\int_{\Go}\Big(\Psi(g(\je\cdot\n))+\Psi^*(\je\cdot\n)\Big)<\infty.
	\end{equation}
Similarly, let $\je\in \je_0 + \B$ be a weak solution to \eqref{pdem} and $(\Delta)$ hold. Then $\je$ is also a variational weak solution.
\end{theorem}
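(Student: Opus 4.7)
The four claims split into two dual pairs. Since the $\je$-pair is proved verbatim by the same two-step argument as the $\phi$-pair (replacing $(\he,b)$, $\nn q$, $[q]$, $\phi-\phi_0$ by $(\fe,g)$, $\ta$, $\ta\cdot\n$, $\je-\je_0$ and {\rm (A1)--(A2)} by {\rm (A1)$^*$--(A2)$^*$}), I describe only the $\phi$-argument.

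\textbf{Step 1: variational $\Rightarrow$ weak and \eqref{konecnost}.} The plan is to first test \eqref{ener} with $q=0\in\PE$, which yields
$$\ii\he(\nn\phi)\ccdot\nn\phi+\int_{\Go}b([\phi])\centerdot[\phi]\le\ii\he(\nn\phi)\ccdot\nn\phi_0+\ii\je_0\ccdot\nn(\phi-\phi_0).$$
Monotonicity together with $\he(0)=b(0)=0$ makes the left-hand side well defined in $[0,\infty]$, while the last integral on the right is finite by Hölder in Orlicz spaces since $\je_0\in\EP{\Phi^*}{\Omega}$ and $\nn(\phi-\phi_0)\in\LP{\Phi}{\Omega}$. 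The potentially troublesome cross term I bound by the $\eps$-Young inequality followed by \eqref{i}:
$$\he(\nn\phi)\ccdot\nn\phi_0\le\eps\,\Phi^*(\he(\nn\phi))+\Phi(\nn\phi_0/\eps)\le\frac{\eps}{\alpha_{\he}}\bigl(\he(\nn\phi)\ccdot\nn\phi+C\bigr)+\Phi(\nn\phi_0/\eps).$$
The crucial point is that $E^{\Phi}$ is a vector space, so $\nn\phi_0/\eps\in\EP{\Phi}{\Omega}$ and hence $\Phi(\nn\phi_0/\eps)\in L^1(\Omega)$ by \eqref{goodups}. Taking $\eps=\alpha_{\he}/2$ absorbs $\tfrac12\he(\nn\phi)\ccdot\nn\phi$ into the left-hand side and yields the finiteness of $\ii\he(\nn\phi)\ccdot\nn\phi+\int_{\Go}b([\phi])\centerdot[\phi]$. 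Invoking \eqref{i}--\eqref{ii} once more produces \eqref{konecnost} and, in particular, $\he(\nn\phi)\in\LP{\Phi^*}{\Omega}$ and $b([\phi])\in\Lp{\Psi^*}{\Go}$.

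With the integrability in hand, every integral in \eqref{wspot} is finite by Hölder. I now exploit that $\PE$ is a linear space: for any fixed $q\in\PE$ and every $\la\in\R$, the function $\la q$ is again in $\PE$, and \eqref{ener} with $\la q$ in place of $q$ reads $A-\la B(q)\le 0$, where
$$A:=\ii(\he(\nn\phi)-\je_0)\ccdot\nn(\phi-\phi_0)+\int_{\Go}b([\phi])\centerdot[\phi],\qquad B(q):=\ii(\he(\nn\phi)-\je_0)\ccdot\nn q+\int_{\Go}b([\phi])\centerdot[q]$$
are now genuine real numbers. Since $\la\in\R$ is arbitrary, necessarily $B(q)=0$, which is precisely the weak formulation \eqref{wspot}.

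\textbf{Step 2: weak solution plus $(\Delta)$ $\Rightarrow$ variational.} The idea is to transport \eqref{wspot} from $\PE$ to $\PB$ and then test with $q=\phi-\phi_0\in\PB$. If the $\Delta_2$-couple is $(\Phi,\Psi)$ then by \eqref{2} one has $P=\PE=\PB$ and there is nothing to extend. Otherwise $(\Phi^*,\Psi^*)$ is $\Delta_2$, whence \eqref{2} gives $\he(\nn\phi)\in\LP{\Phi^*}{\Omega}=\EP{\Phi^*}{\Omega}$ and $b([\phi])\in\Lp{\Psi^*}{\Go}=\Ep{\Psi^*}{\Go}$, and the duality \eqref{5} renders the three functionals $q\mapsto\ii\he(\nn\phi)\ccdot\nn q$, $q\mapsto\int_{\Go}b([\phi])\centerdot[q]$, and $q\mapsto\ii\je_0\ccdot\nn q$ weak-$*$ continuous; the defining weak-$*$ approximation in $\PB$ then carries \eqref{wspot} from $\PE$ to every $q\in\PB$. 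Testing this extended identity with $q=\phi-\phi_0$ and using $[\phi_0]=0$ so that $[\phi-\phi_0]=[\phi]$ yields $A=0$. Subtracting \eqref{wspot} applied to an arbitrary $q\in\PE$ (for which $B(q)=0$) produces
$$\ii(\he(\nn\phi)-\je_0)\ccdot\nn(\phi-\phi_0-q)+\int_{\Go}b([\phi])\centerdot[\phi-q]=A-B(q)=0\le 0,$$
which is \eqref{ener}.

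\textbf{Main obstacle.} The only delicate point of the whole scheme is the integrability argument in Step 1: the integral $\ii\he(\nn\phi)\ccdot\nn\phi_0$ is not a priori finite because $\he(\nn\phi)$ is not yet known to lie in $L^{\Phi^*}$. The resolution rests on combining the $\eps$-Young inequality, the coercivity \eqref{i}, and -- crucially -- the linearity of $E^{\Phi}$, which permits the rescaling $\nn\phi_0\mapsto\nn\phi_0/\eps$ while staying in $L^1$. Step 2's weak-$*$ extension from $\PE$ to $\PB$ is conceptually routine, and it is exactly there that the hypothesis $(\Delta)$ enters.
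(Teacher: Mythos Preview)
Your proposal is correct and follows essentially the same approach as the paper's proof. Both arguments test \eqref{ener} with $q=0$, combine the coercivity \eqref{i}--\eqref{ii} with the $\eps$-Young inequality and the linearity of $E^{\Phi}$ (to handle $\nn\phi_0/\eps$) to obtain \eqref{konecnost}, then use linearity of $\PE$ to upgrade the inequality \eqref{ener} to the identity \eqref{wspot}; for the converse direction, both proofs split the $(\Delta)$ hypothesis into the two $\Delta_2$-cases, using either $\PE=P$ directly or the weak-$*$ approximation in $\PB$ combined with $\he(\nn\phi)\in\EP{\Phi^*}{\Omega}$, $b([\phi])\in\Ep{\Psi^*}{\Go}$.
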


In the rest of the paper, we prove the results stated in this section and finally give also the proof of Meta-theorem~\ref{th:meta}.

\section{Proofs of the main results}
This key part is organized as follows. First, in Section~\ref{SS1}, we show Theorem~\ref{cons}. Then in Section~\ref{SS2} we prove  Theorem~\ref{eq}. Sections~\ref{SS3}~and~\ref{SS4} are devoted to the proofs of Theorem~\ref{ex1} and \ref{ex2}, respectively. Since both proofs are almost identical, we prove Theorem~\ref{ex1} rigorously only for the case ii), i.e. if $(\Pi)$ holds, and Theorem~\ref{ex2} rigorously only for the case i), i.e. when $(\Delta)$ holds true. The corresponding counterparts of the proofs can be done in the very same way and therefore we present here only sketch of these proofs in Sections~\ref{SS5}~and~\ref{SS6}. Finally the proof of Corollary~\ref{coro} and consequently also of Meta-theorem~\ref{th:meta} is presented in Section~\ref{SS7}.

\subsection{Proof of Theorem~\ref{cons}}\label{SS1}
We start the proof by showing that variational weak solution is also weak solution.	Let $\phi\in \phi_0 + P$ be a variational weak solution. Thanks to the Young inequality and the assumption \eqref{i} (coercivity of $\he$), we can write
	\begin{align*}
	(&\he(\nn\phi)-\je_0)\cdot\nn(\phi-\phi_0)\\
	&\ge \alpha_{\he}\Phi^*(\he(\nn\phi))+\alpha_{\he}\Phi(\nn\phi)-D  -\frac{\alpha_{\he}\Phi^*(\he(\nn\phi))}{2} -\frac{\alpha_{\he}\Phi(\nn\phi)}{2}\\
&\quad - \Phi(\tf2{\alpha_{\he}}\nn \phi_0)-\Phi^*(\tf2{\alpha_{\he}}\je_0)- \Phi(\nn \phi_0)-\Phi^*(\je_0)\\
	&\ge \frac{\alpha_{\he}\Phi^*(\he(\nn\phi))}{2} +\frac{\alpha_{\he}\Phi(\nn\phi)}{2} - 2\Phi(\tf2{\alpha_{\he}}\nn \phi_0)-2\Phi^*(\tf2{\alpha_{\he}}\je_0)-D.
\end{align*}
Similarly, we also recall \eqref{ii}
\begin{equation*}
	\alpha_b\Psi([\phi])+\alpha_b\Psi^*(b([\phi]))\leq D+b([\phi])[\phi].
\end{equation*}
Then, we set $q:=0$ in \eqref{ener} and with the help of  above estimates we deduce that
\begin{align}\label{necoo}
\begin{aligned}
&\ii \Phi^*(\he(\nn\phi)) +\Phi(\nn\phi) + \int_{\Go}\Psi([\phi])+\Psi^*(b([\phi]))\\
&\qquad \le C\left(1+\ii \Phi(\tf2{\alpha_{\he}}\nn \phi_0)+\Phi^*(\tf2{\alpha_{\he}}\je_0) \right).
\end{aligned}
\end{align}
Since $\je_0 \in \EP{\Phi^*}{\Omega}$ and $\nabla \phi_0 \in \EP{\Phi}{\Omega}$, we can use  \eqref{goodups} and obtain  that the right hand side of \eqref{necoo} is finite. Hence, we obtain \eqref{konecnost}.

Thus, we just need to show that $\phi$ also satisfies  \eqref{wspot}. Note that thanks to \eqref{konecnost} all integrals in \eqref{wspot} and \eqref{ener} are well defined and finite. Let us define for arbitrary $q\in P$
$$
J(q):=\ii(\he(\nn\phi)-\je_0)\ccdot\nn q+\int_{\Go}b([\phi])\centerdot[q].
$$
Then, because we already have \eqref{konecnost}, we can rewrite \eqref{ener} as
$$
-\infty<J(\phi-\phi_0)\leq J(q)<\infty\qquad \textrm{for all }q\in \PE,
$$
which means that $J$ is bounded from below. But since $J$ is linear and $\PE$ is a linear space, this is  possible if and only if $J(q)=0$ for all $q\in \PE$, which is nothing else than \eqref{wspot}.

Next, we show that if $(\Delta)$ holds and a weak solution satisfies in addition $\phi\in \phi_0 +\PB$ then it is also a variational weak solution. Let us consider first the case when $\Psi$ and $\Phi$ satisfy $\Delta_2$ condition. Then $\PE=P$ and we can simply set $q:=\phi-\phi_0 - \tilde{q}$ in \eqref{wspot} with arbitrary $\tilde{q}\in \PE$ to obtain \eqref{ener} (where we replace $q$ by $\tilde{q}$). In the second case, i.e. if $\Psi^*$ and $\Phi^*$ satisfy $\Delta_2$ condition, we use the fact that $\phi-\phi_0\in \PB$. Thus, we can find a sequence $\{\phi^n-\phi_0\}_{n=1}^{\infty}\subset \PE$ such that
\begin{align}
\nn \phi^n-\nn\phi_0 &\rightharpoonup^* \nn\phi-\nn \phi_0 &&\textrm{ weakly$^*$ in } L^{\Phi}(\Omega),\label{prpr1}\\
[\phi^n] &\rightharpoonup^* [\phi] &&\textrm{ weakly$^*$ in } L^{\Psi}(\Go).\label{prpr2}
\end{align}
Then we set $q:=\phi^n-\phi_0- \tilde{q}$ in \eqref{wspot}, which is now an admissible choice to obtain
\begin{equation}\label{enernn}
\ii(\he(\nn\phi)-\je_0)\ccdot\nn(\phi^n-\phi_0 - \tilde{q})+\int_{\Go}b([\phi])\centerdot[\phi^n-\tilde{q}]= 0 \quad \textrm{ for all } \tilde{q}\in \PE.
\end{equation}
Since $\Psi^*$ and $\Phi^*$ satisfy $\Delta_2$ condition, we see that $\he(\nabla \phi)\in \EP{\phi^*}{\Omega}$ and $b([\phi])\in \Ep{\Psi}{\Go}$. Consequently, we can use \eqref{prpr1}--\eqref{prpr2} and let $n\to \infty$ in \eqref{enernn} to recover \eqref{ener}. Note that in both cases, we obtain \eqref{ener} even with the equality sign.

The second part of the proof, i.e. the part for $\je$, is done analogously and therefore is omitted here.

\subsection{Proof of Theorem~\ref{eq}}\label{SS2}
We start the proof with the claim i). If $\phi$ is a weak solution then it directly follows from \eqref{wspot} that $\je-\je_0 \in \X$ with $\je \cdot \n = b([\phi])$ on~$\Go$. Thus, it remains to check that \eqref{ws1} is satisfied. Hence, let $\ta \in \E$ be arbitrary. Then, using the definition of $\je$, we have
\begin{equation}\label{Va1}
\ii\fe(\je)\ccdot\ta+\int_{\Gamma}g(\je\cdot\n)\centerdot(\ta\cdot\n)-\ii\nn\phi_0\ccdot\ta =  \ii(\nn \phi -\nn \phi_0)\ccdot\ta+\int_{\Gamma}[\phi]\centerdot(\ta\cdot\n).
\end{equation}
Thus, if $\ta$ is in addition $\mathcal{C}^1$, then we can directly integrate by parts and we see that the right hand side vanishes, which finishes the first part of i). Second, assume that $(\Delta)$ holds. In the first case, i.e. if $\Phi$ and $\Psi$ satisfy $\Delta_2$ condition, then we have that $\phi-\phi_0\in \PE$ and the right hand side of \eqref{Va1} vanishes by using the definition of the space $\E$. In the second case, we use the fact that we can approximate $\phi$ by a proper sequence defined in \eqref{prpr1}--\eqref{prpr2} and we can write
$$
\ii(\nn \phi -\nn \phi_0)\ccdot\ta+\int_{\Gamma}[\phi]\centerdot(\ta\cdot\n) = \lim_{n\to \infty}\ii(\nn \phi^n -\nn \phi_0)\ccdot\ta+\int_{\Gamma}[\phi^n]\centerdot(\ta\cdot\n)=0,
$$
where the second equality follows from the fact that for each $n\in \mathbb{N}$, there holds $\phi^n-\phi_0 \in \PE$ and from the definition of the space $\E$. Hence, the integral in \eqref{Va1} vanishes, which is nothing else than \eqref{ws1}.

Next, we focus on the part ii). Hence, let $\je \in \je_0 + \X$ be a weak solution. Then we can set $\ta:=\ta_1$ in \eqref{ws1}, where $\ta_1 \in \mathcal{C}^{1}(\Omega; \R^{d\times N})\cap \E$ is arbitrary fulfilling $\ta_1\equiv 0$ in $\Omega_2$ to obtain
\begin{equation}\label{ws1i}
\int_{\Omega_1}(\fe(\je)-\nn \phi_0)\ccdot\ta_1=0.
\end{equation}
Consequently, the de~Rahm theorem implies that  there exist $\phi_i\in W^{1,1}(\Omega_i; \R^N)$, such that
$$
\fe(\je)=\nabla \phi_1 \quad \Leftrightarrow\quad \je =\he (\nabla \phi_1)\qquad \textrm{ in } \Omega_1.
$$
In addition, since $\partial \Omega_1\cap \Gamma_D\neq \emptyset$, we have from \eqref{ws1i} that $\phi_1$ must be chosen such that $\phi_1=\phi_0$ on $\partial \Omega_1\cap \Gamma_D$. Consequently, it is unique. Similarly, we can uniquely construct $\phi_2$ fulfilling $\phi_2=\phi_0$ on $\partial \Omega_2 \cap \Gamma_D$ and
$$
\fe(\je)=\nabla \phi_2 \quad \Leftrightarrow\quad \je =\he (\nabla \phi_2)\qquad \textrm{ in } \Omega_2.
$$
Thus, defining finally
$$
\phi:= \phi_1 \chi_{\Omega_1} + \phi_2\chi_{\Omega_2}
$$
and using the definition of a weak solution $\je$ and the fact that $\he = \fe^{-1}$, we deduce that (recall here that the notion of $\nabla$ does not reflect the jump over $\Go$)
$$
\ii \Phi(\nabla \phi) + \ii \Phi^* (\he (\nabla \phi))= \ii \Phi (\fe(\je)) +\ii \Phi^* (\je) <\infty.
$$
To identify also a jump $[\phi]$ on $\Go$, we first state the following result, which will be proven at the end of this section.
\begin{lemma}\label{Lvar} Let $\Omega$ satisfy (O1)--(O3) and $f\in L^1(\Go)$ be given. Assume  that for all $\ta \in \mathcal{C}^1(\overline{\Omega}_1;\R^d)$ fulfilling $\Div \ta=0$ in $\Omega_1$ and $\ta \cdot \n=0$ on $\Gamma_N \cap \partial \Omega_1$  there holds
\begin{equation}\label{58}
\int_{\Go}f\ta \cdot \n=0.
\end{equation}
Then $f\equiv 0$ almost everywhere on $\Go$.
\end{lemma}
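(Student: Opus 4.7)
The plan is to proceed by a duality/density argument. We want to exhibit, for every smooth function $g$ compactly supported in the relative interior of $\Go$ (away from $\partial\Go \subset \overline{\Gd^1}\cup\overline{\Gn\cap\partial\Omega_1}$), an admissible test vector field $\ta$ with $\ta\cdot\n|_\Go = g$. Since $\mathcal{C}_c^\infty(\Go^\circ)$ is dense in $L^\infty(\Go)$ in the $(L^1,L^\infty)$-duality and $\partial\Go$ has zero $(d-1)$-measure on $\Go$, the hypothesis \eqref{58} then forces $f=0$ a.e.\ on $\Go$.

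Given such a $g$, I would build $\ta$ in two steps. First, I construct a $\mathcal{C}^1$ field $\ta_0$ with the correct normal trace but generally nonzero divergence, by localizing in a thin tubular neighborhood $T_\delta \subset \overline{\Omega_1}$ of $\operatorname{supp} g$ chosen so small that $T_\delta \cap \partial\Omega_1 \subset \Go$. In normal coordinates $(y,s)$ with $y\in\Go$ and $s\in[0,\delta)$ measuring the distance into $\Omega_1$, set
\begin{equation*}
\ta_0(y - s\,\n(y)) := -g(y)\,\chi(s)\,\n(y), \qquad \chi\in\mathcal{C}_c^\infty([0,\delta)),\ \chi(0)=1,
\end{equation*}
and extend $\ta_0$ by zero outside $T_\delta$. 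Then $\ta_0\in\mathcal{C}^1(\overline{\Omega_1};\R^d)$ satisfies $\ta_0\cdot\n|_\Go = g$ and vanishes identically near $\Gd^1\cup(\Gn\cap\partial\Omega_1)$. Setting $\rho := \Div\ta_0 \in \mathcal{C}_c^\infty(\Omega_1)$, one computes $\int_{\Omega_1}\rho = \int_\Go g$ by the divergence theorem.

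Second, I correct the divergence by finding $\bs w \in \mathcal{C}^1(\overline{\Omega_1};\R^d)$ solving $\Div\bs w = -\rho$ in $\Omega_1$ with $\bs w$ compactly supported away from $\Go\cup(\Gn\cap\partial\Omega_1)$. To arrange this, select a smoothly bounded auxiliary subdomain $\Omega''\subset\Omega_1$ containing $\operatorname{supp}\rho$ and reaching $\Gd^1$ along a small relatively open piece; a Bogovskii-type construction on $\Omega''$ produces such a $\bs w$, the excess flux $-\int_\Go g$ exiting through $\Gd^1$, where no boundary condition is imposed on $\bs w$ (so the compatibility is automatic). Then $\ta := \ta_0 + \bs w$ lies in $\mathcal{C}^1(\overline{\Omega_1};\R^d)$, is divergence-free in $\Omega_1$, satisfies $\ta\cdot\n = 0$ on $\Gn\cap\partial\Omega_1$, and $\ta\cdot\n|_\Go = g$. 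Inserting $\ta$ into \eqref{58} yields $\int_\Go f\, g = 0$.

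The main obstacle is the divergence-correction step: solvability of $\Div\bs w=-\rho$ with mixed boundary behavior on a merely Lipschitz domain $\Omega_1$ is delicate in general. What saves us here is that $\rho$ is smooth and compactly supported in the interior of $\Omega_1$, and that $\bs w$ is genuinely free on $\Gd^1$; consequently one can retreat to a smooth auxiliary subdomain adjacent to $\Gd^1$ and use the classical Bogovskii operator there, obtaining a smooth $\bs w$ with the desired support. Once the admissible $\ta$ is constructed for every $g\in\mathcal{C}_c^\infty(\Go^\circ)$, the density argument outlined above completes the proof.
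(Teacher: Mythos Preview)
Your overall two-step strategy---build a $\mathcal{C}^1$ field with prescribed normal trace on $\Go$, then kill its divergence by a Bogovskii correction on an auxiliary domain meeting $\partial\Omega_1$ only along $\Gd$---is exactly the paper's, and the density argument at the end is fine. The gap is in the first step. Under (O1)--(O3) the set $\Omega_1$ is only Lipschitz, so the unit normal $\n$ on $\Go$ is merely $L^\infty$; your tubular map $(y,s)\mapsto y-s\,\n(y)$ need not be continuous, let alone a $\mathcal{C}^1$ diffeomorphism, and the resulting $\ta_0$ will not lie in $\mathcal{C}^1(\overline{\Omega}_1)$. Even when $\Go$ is smooth, your claim $\rho=\Div\ta_0\in\mathcal{C}_c^\infty(\Omega_1)$ generally fails: the divergence picks up a mean-curvature term (essentially $-g\,\chi\,\Delta d$, with $d$ the signed distance to $\Go$), which does not vanish up to $\Go$, so you cannot enclose $\operatorname{supp}\rho$ in a subdomain $\Omega''\subset\Omega_1$ disjoint from $\Go$ as your Bogovskii step requires.

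The paper sidesteps both problems by abandoning normal coordinates. It works locally on a piece $\Gamma_i\subset\Go$ that is a Lipschitz graph over $(x_1,\ldots,x_{d-1})$ inside a cube $Q_{R_i}\subset\subset\Omega$, and takes as initial field the \emph{vertical} vector
\[
\ta_1=\bigl(0,\ldots,0,\;\psi(x_1,\ldots,x_{d-1})\,\eta(x)\bigr),
\]
with $\psi\in\mathcal{C}_c^\infty$ in the tangential variables and a cutoff $\eta\in\mathcal{C}_c^\infty(Q_{2R_i})$ equal to $1$ on $Q_{R_i}$. This $\ta_1$ is $\mathcal{C}^\infty$ irrespective of the regularity of $\Go$, and $\Div\ta_1=\psi\,\partial_{x_d}\eta$ vanishes on $Q_{R_i}\supset\Gamma_i$, so the Bogovskii correction proceeds exactly as you envisioned. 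The price is that on $\Gamma_i$ one obtains $\ta\cdot\n=\psi\,n_d$ rather than a freely prescribed test function; but since $n_d\ge\varepsilon>0$ on the graph patch, the identity $\int_{\Gamma_i}f\,\psi\,n_d=0$ for all such $\psi$ still forces $f=0$ a.e.\ on $\Gamma_i$, and one then covers $\Go$ by finitely many such patches.
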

The above lemma is used in the following way. We set  $\ta \in \E \cap \mathcal{C}^1(\overline{\Omega};\R^{d\times N})$ in \eqref{ws1} arbitrarily  and using the definition of $\phi$ and integration by parts, we find that
\begin{equation}\label{ws1k}
\begin{split}
0&=\ii(\fe(\je)-\nn \phi_0)\ccdot\ta+\int_{\Gamma}g(\je\cdot\n)\centerdot(\ta\cdot\n)\\
&=\ii \nn (\phi-\phi_0)\ccdot\ta+\int_{\Gamma}g(\je\cdot\n)\centerdot(\ta\cdot\n)\\
&=-\int_{\Go} [\phi-\phi_0]\centerdot(\ta\cdot\n)+\int_{\Gamma}g(\je\cdot\n)\centerdot(\ta\cdot\n)\\
&=\int_{\Go}(g(\je\cdot\n)- [\phi])\centerdot(\ta\cdot\n).
\end{split}
\end{equation}
Since $\ta$ was arbitrary, can use \eqref{58} to conclude
$$
[\phi]=g(\je\cdot\n) \quad \Leftrightarrow\quad b([\phi])=\je \cdot \n \qquad \textrm{ on } \Go.
$$
Consequently, we also have (by using of the notion of weak solution and the fact that $g=b^{-1}$)
$$
\int_{\Go} \Psi([\phi]) + \ii \Psi^* (b([\phi]))= \int_{\Go} \Psi (g(\je\cdot\n)) +\int_{\Go} \Psi^* (\je\cdot\n) <\infty.
$$
Finally, it directly follows from the definition of $\X$ and the identification of $\phi$ that it satisfies \eqref{wspot} and thanks to the above estimates $\phi$ is a weak solution. It just remains to prove Lemma~\ref{Lvar}.
\begin{proof}[Proof of Lemma~\ref{Lvar}]
We start the proof by considering arbitrary $\Gamma_i \subset \Gamma$, where $\Gamma_i$ can be described as a graph of Lipschitz function depending on the first $(d-1)$ spatial variables, i.e.  $x_1,\ldots, x_{d-1}$ (here we use the fact that $\Omega_1$ is Lipschitz) and fulfills for some cube $Q_{R_i}\subset \R^d$, $\Gamma_i \subset Q_{R_i} \subset Q_{2R_i} \subset \Omega$, where $Q_{R_i}:= \x_0 + (-R_i, R_i)^d$ with some $\x_0 \in \mathbb{R}^d$. Furthermore, we can require (this also follows from the Lipschitz regularity of $\Omega_1$ and from proper orthogonal transformation) that for some $\varepsilon>0$
\begin{equation}
\label{require}
\n \cdot (\underset{(d-1)\textrm{-times}}{\underbrace{0,\ldots,0}},1) \ge \varepsilon \qquad \textrm{on } \Gamma_i.
\end{equation}

Next, let $\psi \in \mathcal{C}^{\infty}_0 (\{ \x_0 + (-R_i,R_i)^{d-1}\})$ be arbitrary function depending only on $x_1,\ldots,x_{d-1}$ and $g\in \mathcal{C}^{\infty}_0 (Q_{2R_i})$ be arbitrary function fulfilling $g\equiv 1$ in $Q_{R_i}$. Then we set
$$
\ta_1:=(\underset{(d-1)\textrm{-times}}{\underbrace{0,\ldots,0}}, \psi(x_1,\ldots,x_{d-1}) g(x_1,\ldots, x_d)).
$$
Note that $\ta_1\in \mathcal{C}^{\infty}_0(\R^d; \R^d)$. Finally, since $\Omega_1$ is connected and $\Gamma_D$ has positive measure we can find a smooth open connected set $G\subset \R^d$ such that
$$
\begin{aligned}
\{x\in \Omega_1; \; \psi(x) \partial_{x_d}g(x) \neq 0\}&\subset  G,\\
\overline{G}\cap \partial \Omega_1 &\subset \Gamma_D,\\
G\setminus \overline{\Omega}_1 &\neq \emptyset.
\end{aligned}
$$
Finally, we find an arbitrary $h\in \mathcal{C}^{\infty}_0 (G\setminus \overline{\Omega}_1)$ such that
\begin{equation}
\int_{G\setminus \overline{\Omega}_1} h = -\int_{G\cap \Omega_1} \psi(x) \partial_{x_d}g(x). \label{compap}
\end{equation}
Next, we use the Bogovskii operator and we can find $\ta_2 \in \mathcal{C}^{\infty}_0 (G; \R^d)$ satisfying
$$
\Div \ta_2 =  \psi \partial_{x_d}g + h \qquad \textrm{in } G.
$$
Note that such function can be found due to the compatibility assumption \eqref{compap}. Furthermore, we simply extend $\ta_2$ by zero outside $G$. Having prepared $\ta_1$ and $\ta_2$, we set $\ta:=\ta_1-\ta_2$. Then it follows from the construction that in $\Omega_1$ (note that $h$ is not supported in $\Omega_1$)
$$
\Div \ta = \Div \ta_1 - \Div \ta_2 = \psi \partial_{x_d}g-\psi \partial_{x_d}g=0
$$
and that $\ta=\bs{0}$ on $\Gamma_N$. Consequently, $\ta$ can be used in \eqref{58} and we have
$$
0=\int_{\Gamma} f (\ta \cdot \n) = \int_{\Gamma_i} f \psi n_d.
$$
Since $\psi$ is arbitrary then $fn_d=0$ almost everywhere\footnote{Here in fact the function $\psi$ depends only on the first $(d-1)$ variables, but since the set $\Gamma_i$ is described as a graph of a Lipschitz mapping depending on $x_1,\ldots, x_{d-1}$, we can use the standard substitution and the fundamental theorem about integrable functions.} in $\Gamma_i$. Further, since $n_d >0$ everywhere on $\Gamma_i$ then
\begin{equation*}
f=0\quad\text{ on }\Gamma_i.
\end{equation*}
This statement holds true for arbitrary $\Gamma_i$ and therefore can be extended to the whole $\Gamma$. The proof is complete.
\end{proof}

\subsection{Proof of Theorem~\ref{ex1}}\label{SS3}
In this part, we assume that $(\Pi)$ holds, i.e. there exists $F_{\he}$ and $F_b$ such that for any $\ve\in\R^{d\times N}$ and $z\in\R^N$
$$
\frac{\partial F_{\he}(\ve)}{\partial \ve}=\he(\ve)\quad\text{and}\quad \frac{\partial F_{b}(z)}{\partial z}=b(z).
$$
Furthermore, since $\he$ and $b$ are coercive and monotone mappings (see \eqref{nula}--\eqref{ii}), it directly follows that $F_{\he}$ and $F_b$ are $N$-functions (non-negative, even, convex mappings). In addition, we evidently have the following identities for the G\^ateaux derivatives of $\he$ and $b$:
\begin{equation}\label{gat1}
\partial_{\uu}F_{\he}(\ve)\equiv\lim_{\la\to0_+}\f1{\la}(F_{\he}(\ve+\la\uu)-F_{\he}(\ve))=\he(\ve)\ccdot\uu,\quad\ve,\uu\in\R^{d\times N},
\end{equation}
and analogously
\begin{equation}\label{gat2}
\partial_yF_b(z)=b(z)y,\quad z,y\in\R^N.
\end{equation}
In addition, it follows from the definition of the convex conjugate function that we can replace \eqref{i}--\eqref{ii} by more sharp identities
\begin{align}
\he(\ve)\ccdot \ve &= F_{\he}(\ve)+ F^*_{\he}(\he(\ve)),\label{ihe}\\
b(z) \centerdot z&=F_b(z)+ F^*_b(b(z))\label{iihe}
\end{align}
and with the help of \eqref{ihe}--\eqref{iihe}, we can identify $\Phi$ and $\Psi$ from \eqref{i}--\eqref{ii} with $F_{\he}$ and $F_b$, i.e. we set in the rest of the proof $\Phi:=F_{\he}$ and $\Psi:=F_b$.
Finally, we define the following functional
\begin{equation}\label{funct}
I(p):=\ii F_{\he}(\nn (\phi_0+p)) - \je_0 \ccdot \nn(\phi_0+p)+\int_{\Go}F_b([p]) \qquad \textrm{ for all }p\in P
\end{equation}
and look for the minimizer, i.e. we want to find $p\in P$ such that for all $q\in P$ there holds
\begin{equation}
I(p)\le I(q) \qquad \Leftrightarrow \qquad I(p)=\min_{q\in P}I(q). \label{minch}
\end{equation}
To prove the existence of $p$ fulfilling \eqref{minch}, we define
$$
m:=\inf_{q\in P} I(q)
$$
and find $\set{p^n}_{n=1}^{\infty}$ as a minimizing sequence of $I$. It follows from the assumptions on $\phi_0$ and $\je_0$ that such a sequence can be found and it fulfils for all $n\in \mathbb{N}$
$$
I(p^n)\le 2I(0) <\infty.
$$
Hence, using the assumption on $\je_0$, the property \eqref{goodups} and  the Young inequality, and defining $\phi^n:=\phi_0+p^n$, we find that
\begin{equation}\label{qwer}
\begin{aligned}
&\ii F_{\he}(\nabla \phi^n) + \int_{\Go}F_b([\phi^n])\\
&\le 2\left(\ii F_{\he}(\nabla \phi^n) -\je_0 \ccdot \nn \phi^n + \int_{\Go}F_b([\phi^n])\right)  +2\ii F^*_{\he}(2\je_0) \\
&\le 4I(0)+2\ii F^*_{\he}(2\je_0) <\infty.
\end{aligned}
\end{equation}
Having such uniform bound, we can use the Banach-Alaoglu theorem, and find $\phi\in\phi_0+P$ and a subsequence, that we do not relabel, such that
\begin{equation}\label{spacess}
\begin{aligned}
\nn\phi^n&\wcs\nn\phi&&\inn\LP{\Phi}{\Omega},\\
[\phi^n]&\wcs[\phi]&&\inn L^{\Psi}(\Go)
\end{aligned}
\end{equation}
(there is no need to identify the weak limits since the operators of trace, $\nn$ and $[\cdot]$ are linear). Obviously, these two convergence results hold in the weak-$L^1$ topology as well (since $\Phi$ and $\Psi$ are superlinear). Thus, thanks to the convexity of $F_{\he}$ and $F_b$
and by the fact that
$$
F_{\he}(\nn (\phi_0+p)) - \je_0 \ccdot \nn(\phi_0+p) \ge -F^*_{\he}(\je_0) \in L^1(\Omega),
$$
we can use the weak lower semicontinuity of convex functionals to observe that
$$m=\lim_{n\to\infty}I(p^n)\geq I(p)\geq m,$$
hence $I(p)=I(\phi-\phi_0)=m$ is a minimum. Furthermore, it follows from \eqref{qwer} that
\begin{equation}\label{qwer2}
\begin{aligned}
&\ii F_{\he}(\nabla \phi^n) + \int_{\Go}F_b([\phi^n])<\infty.
\end{aligned}
\end{equation}

Now we will prove that $\phi$ is a variational weak solution. This will be done by deriving the Euler-Lagrange equation corresponding to $I$. Let $q\in \PE$ be arbitrary and denote $\phi_q:=\phi_0+q$. We set
\begin{align*}
D_{\he}(\la)&:=\f{F_{\he}(\nn\phi+\la(\nn \phi_q-\nn\phi))- F_{\he}(\nn\phi)}{\la}\\
D_{b}(\la)&:=\f{F_b([\phi]+\la([\phi_q]-[\phi]))-F_b([\phi])}{\la},
\end{align*}
where $\la\in(0,1)$ is arbitrary. Then, we use the minimizing property \eqref{minch} to get
$$
I(p)\le I((1-\lambda)p+\lambda \phi_q),
$$
which in terms of $D_{\he}$ and $D_b$ can be rewritten by using \eqref{funct} as
\begin{equation}\label{funct23}
-\ii  \je_0 \ccdot (\nn \phi - \nn\phi_0 -\nn q) \le \ii D_{\he}(\la) +\int_{\Go}D_b(\la).
\end{equation}
Next, \eqref{gat1} and \eqref{gat2} imply that (recall that $[\phi_0]=0$ on $\Go$)
\begin{align*}
D_{\he}(\la)&\to \he(\nn \phi) \ccdot \nn(q -\phi+\phi_0),\\
D_{b}(\la)&\to b([\phi])\centerdot [q -\phi]
\end{align*}
almost everywhere in $\Omega$ and $\Go$, respectively, as $\la \to 0_+$. Our goal now is to let $\lambda \to 0_+$ in \eqref{funct23}. Indeed, if we can justify the limit procedure in the term on the right hand side and if we use the above point-wise result, we directly obtain \eqref{ener}, i.e. $\phi$ is a variational weak solution. Then we can use the already proven Theorem~\ref{cons} to conclude that $\phi$ is also a weak solution. Hence, to finish the proof, we need to justify the limit procedure. Since, we  need to pass to the limit with the inequality sign, we use the Fatou lemma. Therefore we need to find $I_1 \in L^1(\Omega)$ and $I_2 \in L^1(\Go)$ such that for all $\lambda \in (0,1)$ we have
\begin{equation}\label{goalik}
D_{\he}(\la)\le I_1 \textrm{ in } \Omega \qquad \textrm{ and } \qquad D_b(\la)\le I_2 \textrm{ on } \Go
\end{equation}
and  that for all $\lambda\in (0,1)$ we have (possibly non-uniformly)
\begin{equation}\label{goalik2}
\ii D_{\he}(\lambda) >-\infty, \qquad \int_{\Go} D_b(\la) >-\infty.
\end{equation}
Thanks to nonnegativity of $F_{\he}$ and $F_b$, and due to \eqref{spacess} and \eqref{qwer2}, we get
$$
\begin{aligned}
\ii D_{\he}(\la)&\geq-\f1{\la}\ii F_{\he}(\nn\phi)>-\infty,\\
\int_{\Go} D_{b}(\la)&\geq-\f1{\la}\int_{\Go} F_{b}([\phi])>-\infty
\end{aligned}
$$
for all $\lambda \in (0,1)$, which is \eqref{goalik2}. To show also \eqref{goalik}, we use the convexity and the nonnegativity of $F_{\he}$, which  yields
\begin{align}\label{conev}
D_{\he}(\la)\leq\f{(1-\la) F_{\he}(\nn\phi)+\la F_{\he}(\nn \phi_q)- F_{\he}(\nn\phi)}{\la}&\le F_{\he}(\nn q + \nn \phi_0)
\end{align}
for all $\la\in(0,1)$.

To see that $I_1:=F_{\he}(\nn q+\nn \phi_0)\in L^1(\Omega)$, we use the assumption on $\phi_0$ and $q$. Since both $\nn q, \nn \phi_0\in \EP{\Phi}{\Omega}$, which is a linear space, we have that $\nn q+ \nn \phi_0 \in \EP{\Phi}{\Omega}$ as well. Consequently, we can use \eqref{goodups} to conclude that
$$
\ii I_1=\ii F_{\he}(\nn q+\nn \phi_0)<\infty,
$$
which leads to the first part of \eqref{goalik}. The second part is however proven similarly. Hence, we are allowed to use the Fatou lemma and to let $\la \to 0_+$ in \eqref{funct23} to obtain \eqref{ener}. This finishes the existence part of the proof. 
%
%
%

\subsection{Proof of Theorem~\ref{ex2}}\label{SS4}
We assume in this part that $(\Delta)$ holds. We proceed here as follows. First, we define the Galerkin approximation, then we derive uniform estimates and pass to the limit. Finally, depending on what kind of $\Delta_2$ condition is satisfied, we finish the proof.
\subsubsection{Galerkin approximation}
We know that $\E$ is a separable space, therefore we can find $\set{\we^i}_{i=1}^{\infty}\subset \E$, whose linear hull is dense in $\E$. Next, we construct an approximative sequence $\je^n$ in the following way. For $\af=(\alpha_1,\ldots,\alpha_n)\in\R^n$, we denote $\we_{\af}=\je_0+ \sum_{i=1}^n\alpha_i\we^i$.  Then we define the $i$-th component, $i\in\{1,\ldots,n\}$, of the mapping $\Fe$ by
\begin{equation}\label{6}
	\Fe_i(\af):=\ii\fe(\we_{\af})\ccdot\we^i+\int_{\Go}g(\we_{\af}\cdot\n)\centerdot(\we^i\cdot\n)-\ii\nn\phi_0\ccdot\we^i,\quad\af\in\R^n.
\end{equation}
Our goal is to find $\af^{\ast} \in \R^n$ such that $\Fe(\af^{\ast})=0$. Indeed, having such $\af^{\ast}$ is equivalent to have $\je^n:=	\je_0+ \sum_{i=1}^n\alpha^{\ast}_i\we^i$ such that
\begin{align}\label{gal}
	\qquad\ii\fe(\je^n)\ccdot\we^i+\int_{\Go}g(\je^n\cdot\n)\centerdot(\we^i\cdot\n)=\ii\nn\phi_0\ccdot\we^i \textrm{ for all } i\in\{1,\ldots,n\}.
\end{align}
Hence, we focus now on finding the zero point of $\Fe$ defined in \eqref{6}.
Since we assume that $\fe$ and $g$ are Carath\'{e}odory mappings and $\je_0 \in \EP{\Phi^*}{\Omega}$, we can use \eqref{goodups} to deduce that  the mapping $\Fe$ is continuous on $\R^{n}$. Moreover, using the growth properties of $\fe$ and $g$ (assumption (A2)$^*$), the Young inequality, the fact that $\je_0 \cdot \n = 0$ on $\Gamma$, $\je_0 \in \EP{\Phi^*}{\Omega}$ and also that $\nn\phi_0\in\EP{\Phi}{\Omega}$, we get
\begin{equation}	
\begin{aligned}\label{7}
	&\Fe(\af)\cdot\af :=\sum_{i=1}^n\Fe_i(\af)\alpha_i=\ii\fe(\we_{\af})\ccdot (\we_{\af}-\je_0)\\
&\quad +\io g(\we_{\af}\cdot\n)\centerdot(\we_{\af}\cdot\n)-\ii\nn\phi_0\ccdot (\we_{\af}-\je_0)\\
	&\geq \alpha_{\fe}\ii(\Phi^*(\we_{\af})+\Phi(\fe(\we_{\af}))+\alpha_g\io(\Psi^*(\we_{\af}\cdot\n)+\Psi(g(\we_{\af}\cdot\n))\\
	&\qquad-\f {\alpha_{\fe}}2\ii(\Phi^*(\we_{\af})+\Phi(\fe(\we_{\af}))\\
&\quad-2\ii\Phi\left(\f{2}{\alpha_{\af}}\nn\phi_0\right)+\Phi^*\left(\f{2}{\alpha_{\af}}\je_0\right)-C\\
	&\geq \frac{\alpha_{\fe}}{2}\ii(\Phi^*(\we_{\af})+\Phi(\fe(\we_{\af}))+\frac{\alpha_g}{2}\io(\Psi^*(\we_{\af}\cdot\n)+\Psi(g(\we_{\af}\cdot\n))-C.
	\end{aligned}
\end{equation}
	Since the mapping $\af\mapsto\we_{\af}$ is linear and since $\Phi^*$, $\Psi^*$ satisfy \eqref{grow}, there exists $R>0$ such that if $|\af|>R$, then $\Fe(\af)\cdot\af>1$. Hence, using a well known modification of the Brouwer fixed point theorem, there exists a point ${\af^{\ast}}\in\R^{n}$ with $\Fe({\af^{\ast}})=0$, which we wanted to show. Consequently, we also obtained the existence of $\je^n$ solving \eqref{gal}.

\subsubsection{Uniform estimates and limit $n\to \infty$}
It follows from \eqref{gal} (see the computation in \eqref{7}) that the identity
\begin{align}\label{enn}
	\ii\fe(\je^n)\ccdot(\je^n-\je_0)+\int_{\Go}g(\je^n\cdot\n)\centerdot(\je^n\cdot\n)&=\ii\nn\phi_0\ccdot(\je^n-\je_0)
\end{align}
is valid for all $n\in \mathbb{N}$. Consequently, it follows by the same procedure as in \eqref{7} that we have the following uniform bounds
\begin{equation}\label{ue2}
	\ii(\Phi^*(\je^n)+\Phi(\fe(\je^n)))+\int_{\Go}(\Psi^*(\je^n\cdot\n)+\Psi(g(\je^n\cdot\n)))\leq C.
	\end{equation}
Thus, using the Banach-Alaoglu theorem, we find weakly-$\ast$ converging subsequences (that we do not relabel), so that
	\begin{align}
	\je^n&\wcs\je &&\text{in}\quad \LP{\Phi^*}{\Omega},\label{14}\\
	\fe(\je^n)&\wcs \overline{\fe} &&\text{in}\quad\LP{\Phi}{\Omega},\label{12}\\
	\je^n\cdot\n&\wcs\je\cdot\n &&\text{in}\quad L^{\Psi^*}(\Go),\label{15}\\
	g(\je^n\cdot\n)&\wcs \overline{g} &&\text{in}\quad L^{\Psi}(\Go)\label{13}
	\end{align}
as $n\to\infty$. Furthermore, since $\je^n-\je_0 \in \E$, we have from the above convergence result that $\je-\je_0 \in \B$. Next, we pass to the limit also in \eqref{gal}.  Since $\we^i\in \EP{\Phi^*}{\Omega}$ and $\we^i\cdot\n\in\Ep{\Psi^*}{\Go}$ for all $i\in\N$, we can use \eqref{12} and  \eqref{13} to let $n\to \infty$ in \eqref{gal} for fix $i\in \N$ and obtain
	\begin{align}\label{gal2}
	\ii\overline{\fe}\ccdot\we^i+\int_{\Go}\overline{g}\centerdot(\we^i\cdot\n)&=\ii\nn\phi_0\ccdot\we^i \quad \textrm{for all }i\in\{1,\ldots,n\}
	\end{align}
and since the linear hull of $\{\we^i\}_{i\in\N}$ is dense in $\E$, we obtain
	\begin{align}\label{gal3}
	\ii\overline{\fe}\ccdot\ta+\int_{\Go}\overline{g}\centerdot(\ta\cdot\n)&=\ii\nn\phi_0\ccdot\ta \quad \textrm{ for all }\ta\in\E.
	\end{align}

\subsubsection{Identification of $\fe$ and $g$ and the energy (in)equality}
To finish the proof, it remains to show that
\begin{equation}
\label{fuck}
\overline{\fe}= \fe(\je) \textrm{ a.e. in } \Omega \qquad \textrm{ and } \quad \overline{g}=g(\je \cdot \n)\textrm{ a.e. on } \Gamma
\end{equation}
and also that we constructed the variational solution. We start the proof by claiming that
\begin{align}\label{ga23}
	\ii\overline{\fe}\ccdot (\je -\je_0)+\int_{\Go}\overline{g}\centerdot(\je \cdot\n)&=\ii\nn\phi_0\ccdot (\je -\je_0).
\end{align}
The importance of \eqref{ga23} is not only that it will allow us to show \eqref{fuck} but also that having \eqref{fuck}, \eqref{ga23} and \eqref{gal3}, we immediately get \eqref{wsf1} even with the equality sign.
	
Hence, we prove \eqref{ga23} provided that $(\Delta)$ holds. First, in case  that $\Phi^*$ and $\Psi^*$ satisfy the $\Delta_2$ condition then  $\E=\X$ and \eqref{gal3} can be tested by any $\ta\in\X$, in particular by $\je-\je_0$ and \eqref{ga23} follows. In the opposite case, i.e. if $\Phi$ and $\Psi$ satisfy the $\Delta_2$ condition, then we have from \eqref{12} and \eqref{13} that 	$\overline{\fe}\in \EP{\Phi}{\Omega}$ and $\overline{g} \in E^{\Psi}(\Go)$. Furthermore, it follows from \eqref{gal2} that for all $i\in N$
\begin{align}\label{gal67}
	\ii\overline{\fe}\ccdot(\je^i-\je_0)+\int_{\Go}\overline{g}\centerdot(\je^i\cdot\n)&=\ii\nn\phi_0\ccdot(\je^i-\je_0).
	\end{align}
But now, we can use the convergence results \eqref{14} and \eqref{15} (thanks to $\overline{\fe}\in \EP{\Phi}{\Omega}$ and $\overline{g} \in E^{\Psi}(\Go)$) and let $i\to \infty$ in \eqref{gal67} to obtain \eqref{ga23}.
Next, using the facts that $\nn\phi_0\in\EP{\Phi}{\Omega}$ and $\je_0 \in \EP{\Phi^*}{\Omega}$ and \eqref{14}--\eqref{13}, we can let $n\to \infty$ in \eqref{enn} to deduce
\begin{equation}
\begin{aligned}
	&\lim_{n\to\infty}\left(\ii\fe(\je^n)\ccdot\je^n+\int_{\Go}g(\je^n\cdot\n)\centerdot(\je^n\cdot\n)\right)\\
&=\lim_{n\to\infty}\left(\ii\nn\phi_0\ccdot(\je^n-\je_0) + \ii\fe(\je^n)\ccdot\je_0\right)\\
	&=\ii\nn\phi_0\ccdot(\je-\je_0) + \ii \overline{\fe}\ccdot\je_0\overset{\eqref{ga23}}=\ii\overline{\fe}\ccdot\je+\int_{\Go}\overline{g}\centerdot(\je \cdot\n). \label{limsup}
\end{aligned}
\end{equation}
Now we follow \cite{B}, see also \cite{Bul1}. Let $\ve \in L^{\infty}(\Omega; \R^{d\times N})$ and $z\in L^{\infty}(\Gamma; \R^N)$ be arbitrary. Using the monotonicity assumptions (A1)$^*$, we have
\begin{equation} \label{eq12}
\begin{aligned}
0&\le \lim_{n\to\infty}\ii(\fe(\je^n)-\fe(\ve))\ccdot (\je^n-\ve)+\int_{\Go}(g(\je^n\cdot\n)-g(z))\centerdot(\je^n\cdot\n-z)\\
&=\ii(\overline{\fe}-\fe(\ve))\ccdot (\je-\ve)+\int_{\Go}(\overline{g}-g(z))\centerdot(\je \cdot\n-z),
\end{aligned}
\end{equation}
where we used \eqref{14}--\eqref{13} and \eqref{limsup}. Finally, we closely follow \cite{Bul1,GMW12,Gw22} (see also \cite[Lemma 2.4.2.]{B} for similar procedure for more general monotone mappings). We define the sets
$$
\Omega_j:= \{x\in \Omega; \; |\je(x)|\le j\}, \quad \Gamma_j:=\{x\in \Gamma;\; |\je(x)\cdot \n(x)|\le j\}.
$$
Then for arbitrary $\varepsilon>0$, $\overline{\ve}\in L^{\infty}(\Omega; \R^{d\times N})$, $\overline{z}\in L^{\infty}(\Gamma; \R^N)$ and arbitrary $j\le k <\infty$, we set
$$
\ve:=\je \chi_{\Omega_k} -\varepsilon \overline{\ve}\chi_{\Omega_j},\qquad z:=\je \cdot \n \chi_{\Gamma_k} -\varepsilon \overline{z}\chi_{\Gamma_j}
$$
in \eqref{eq12}. Doing so, we obtain (using also the fact that $\fe(0)=g(0)=0$)
\begin{equation} \label{eq123}
\begin{aligned}
0&\le \ii(\overline{\fe}-\fe(\je \chi_{\Omega_k} -\varepsilon \overline{\ve}\chi_{\Omega_j}))\ccdot (\je (1-\chi_{\Omega_k}) +\varepsilon \overline{\ve}\chi_{\Omega_j})\\
&\quad +\int_{\Go}(\overline{g}-g(\je \cdot \n \chi_{\Gamma_k} -\varepsilon \overline{z}\chi_{\Gamma_j}))\centerdot((\je \cdot\n)(1-\chi_{\Gamma_k}) +\varepsilon \overline{z}\chi_{\Gamma_j})\\
&= \varepsilon \int_{\Omega_j}(\overline{\fe}-\fe(\je  -\varepsilon \overline{\ve}))\ccdot \overline{\ve} +\varepsilon \int_{\Gamma_j}(\overline{g}-g(\je \cdot \n  -\varepsilon \overline{z}))\centerdot \overline{z}\\
&+\int_{\Omega \setminus \Omega_k}\overline{\fe}\ccdot \je  +\int_{\Go\setminus \Gamma_k}\overline{g}\centerdot(\je \cdot\n).
\end{aligned}
\end{equation}
Thanks to \eqref{14} and \eqref{12} and since $|\Omega \setminus \Omega_k| \to 0$, $|\Gamma\setminus \Gamma_k|\to 0$ as $k\to \infty$, we can let $k\to \infty$ in \eqref{eq123} to deduce
\begin{equation*}
\begin{aligned}
0&\le \varepsilon \int_{\Omega_j}(\overline{\fe}-\fe(\je  -\varepsilon \overline{\ve}))\ccdot \overline{\ve} +\varepsilon \int_{\Gamma_j}(\overline{g}-g(\je \cdot \n  -\varepsilon \overline{z}))\centerdot \overline{z}.
\end{aligned}
\end{equation*}
Dividing by $\varepsilon$ and letting $\varepsilon \to 0_+$, using the definition of $\Omega_j$ and $\Gamma_j$ (leading to the fact that $\je$ and also $\je \cdot \n$ are bounded on the integration domain) and the fact that $\fe$ and $g$ are Carath\'{e}dory, we finally observe
\begin{equation*}
\begin{aligned}
0&\le \int_{\Omega_j}(\overline{\fe}-\fe(\je ))\ccdot \overline{\ve} +\int_{\Gamma_j}(\overline{g}-g(\je \cdot \n  )) \centerdot\overline{z}.
\end{aligned}
\end{equation*}
Setting
$$
\overline{\ve}:=- \frac{\overline{\fe}-\fe(\je )}{1+|\overline{\fe}-\fe(\je )|}\qquad \textrm{and} \qquad \overline{z}:= -\frac{\overline{g}-g(\je \cdot \n  )}{1+|\overline{g}-g(\je \cdot \n  )|}
$$
we deduce that \eqref{fuck} is valid almost everywhere in $\Omega_j$ (and $\Gamma_j$, respectively) for every $j\in \mathbb{N}$. Since $|\Omega \setminus \Omega_{j}|\to 0$ and $|\Gamma\setminus\Gamma_j|\to0$ as $j\to \infty$, it directly follows that \eqref{fuck} holds.

%
%

\subsubsection{Uniqueness}
We start the proof by claiming that \eqref{ws1} holds for all $\ta \in \B$. Indeed, if $\Psi^*$ and $\Phi^*$ satisfy $\Delta_2$ condition then $\E=\B$ and there is nothing to prove. On the other hand if $\Psi$ and $\Phi$ satisfy $\Delta_2$ condition, then we use the fact $\fe(\je)\in\LP{\Phi}{\Omega}=\EP{\Phi}{\Omega}$ and $g(\je\cdot\n)\in\Lp{\Psi}{\Go}=\Ep{\Psi}{\Go}$. Hence, for arbitrary $\ta \in \B$, we can find an approximating sequence $\{\ta_k\}_{k=1}^{\infty}\subset \E$ such that
$$
(\ta_k, \ta_k\cdot \n)\wcs(\ta,\ta\cdot\n)\inn\LP{\Phi^*}{\Omega}\times\Lp{\Psi^*}{\Go}.
$$
We replace  $\ta$ by $\ta_k$ in \eqref{ws1}  and let $k\to\infty$. Using the above weak start convergence result, we recover that \eqref{ws1} holds also for $\ta$.

Finally, assume that we have to solutions $\je_1,\je_2 \in \je_0 +\B$. Subtracting \eqref{ws1} for $\je_2$ from that one for $\je_1$ we have for all $\ta \in \B$
$$
\ii (\fe (\je_1)-\fe(\je_2))\ccdot \ta + \int_{\Go} (g(\je_1\cdot \n)-g(\je_2\cdot \n))\centerdot(\ta \cdot \n) =0.
$$
Setting finally $\ta:=\je_1-\je_2\in \B$ and using the strict monotonicity of $\fe$, we find that $\je_1=\je_2$ in $\Omega$, which finishes the uniqueness part.

\subsection{Proof of Theorem~\ref{ex1}- case $(\Delta)$ holds}\label{SS5}
	This proof is analogous to the preceding proof of Theorem~\ref{ex2}~(i). Again, we approximate the problem using separability of $\PE$ and the Galerkin method. Eventually, we construct an approximation $\phi^n$ satisfying
	$$\ii\he(\nn\phi^n)\ccdot\nn q+\int_{\Go}b([\phi^n])\centerdot[q]=\ii\je_0\ccdot\nn q$$
	for all $q$ from some $n$-dimensional subspace of $\PE$. Then, using the analogous a~priori estimate to \eqref{ue2} and very similar limiting procedure, we let $n\to\infty$ and obtain \eqref{wspot}.

In addition, it is evident, that we obtain a weak solution $\phi \in \phi_0 + \PB$, which is the last claim of Theorem~\ref{ex1}. Furthermore, assume that $q\in \PB$ is arbitrary. Therefore it can be approximated by a weakly star convergent sequence $\{q^n\}_{n=1}^{\infty}\subset\PE$. Since $\he(\nabla \phi)\in \EP{\Phi^*}{\Omega}$ and $b([\phi])\in \Ep{\Psi^*}{\Go}$, we can now use \eqref{wspot}, where we replace $q$ by $q^n$ and using the weak star convergence, we can conclude that \eqref{wspot} holds even for all $q\in \PB$. Finally, assume that we have to solutions $\phi_1, \phi_2 \in \phi_0 + \PB$. Then using \eqref{wspot} and the above argument, we can deduce that
$$
\ii (\he(\nn \phi_1)-\he(\nn\phi_2))\ccdot \nn q + \int_{\Gamma} (b([\phi_1])-b([\phi_2]))\centerdot [q] =0.
$$
Hence, setting $q:=\phi_1-\phi_2 \in \PB$ in the above identity, we observe with the help of the strict monotonicity of $\he$  that
$$
\nn \phi_1 = \nn \phi_2 \textrm{ in }\Omega.
$$
Hence, since $\phi_1=\phi_2$ on the sets $\Gamma_D^1\subset \partial \Omega_1$, $\Gamma_D^2\subset \partial \Omega_2$ of positive measure, we see that $\phi_1=\phi_2$ in $\Omega_1$ and also in $\Omega_2$ and the solution is unique in the class $\phi_0 + \PB$.
%
%
%

\subsection{Proof of Theorem~\ref{ex2}~(ii)}\label{SS6}
This proof is analogous to the proof of Theorem~\ref{ex1}~(ii). Indeed, it is easy to see that if we define
$$
I(\ta):=\ii (F_{\fe}(\je_0+\ta) - \nn \phi_0 \ccdot \ta) + \int_{\Go} F_g(\je\cdot \n),\quad\ta\in\X,
$$
we can proceed as before  to get a minimum $\ta\in\X$ and the corresponding $\je:=\je_0 +\ta$  satisfying \eqref{wsf1}. This minimum is a weak solution by Theorem~\ref{cons}.

\subsection{Proof of Corollary~\ref{coro}.}\label{SS7}
We only need to prove that the nonlinearities defined in \eqref{const} satisfy all the assumptions of Theorem~\ref{ex1}. Namely, we show that ($\Pi$) holds and that $(\Delta)$ is valid. We define,
	\begin{equation}\label{exp}
	\Phi(\ve)=F_{\he}(\ve):= \cosh (|\ve|)-1, \qquad \Psi(z)=F_b(z):=\exp(|z|) -|z|-1.
	\end{equation}
It is clear that both functions are $N$-functions. Moreover, by a direct computation, we have that
$$
\frac{\partial F_{\he}(\ve)}{\partial \ve} = \f{\sinh|\ve|}{|\ve|}\ve, \qquad \frac{\partial F_{b}(z)}{\partial z} = \f{\exp (|z|)-1}{|z|}z
$$
and thus ($\Pi$) holds. Moreover, $F_{\he}$ and $F_b$ are strictly convex. Hence, we use Theorem~\ref{ex1} to get the existence of a weak solution.

To prove also further properties, we show that $\Psi^*$ and $\Phi^*$ satisfy $\Delta_2$ condition and consequently $(\Delta)$ holds as well and having such property, we can even prove uniqueness of a weak solution. First, one can easily observe that there exists $K>1$ such that
\begin{align*}
2K\Phi(\ve)&\le \Phi(2\ve) \textrm{ for all }\ve \in \R^{d\times N}, \, |\ve|\ge 1,\\
2K\Psi(z)&\le \Psi(2z) \textrm{ for all } z\in \R^N, \, |z|\ge 1.
\end{align*}
Then, by \cite[Theorem~4.2.]{Kras}, this implies that $\Phi^*$ and $\Psi^*$ satisfy the $\Delta_2$ condition. The proof is complete.



\def\cprime{$'$}
\providecommand{\bysame}{\leavevmode\hbox to3em{\hrulefill}\thinspace}
\providecommand{\MR}{\relax\ifhmode\unskip\space\fi MR }
\providecommand{\MRhref}[2]{%
  \href{http://www.ams.org/mathscinet-getitem?mr=#1}{#2}
}
\providecommand{\href}[2]{#2}

\end{document}